\newtheorem{theorem}{Theorem}[section]
\newtheorem{lemma}[theorem]{Lemma}
\newtheorem{example}{Example}[section]
\theoremstyle{definition}
\theoremstyle{remark}
\newtheorem{remark}{Remark}[section]
\begin{document}

%\begin{frontmatter} %Automatica style

%\title{\vspace*{-1.5cm}
%\begin{flushright}
%\begin{minipage}{4.7cm}
%\tiny {\sc Dept. of Math./CMA \hfill University of Oslo\\
%Pure Mathematics \hfill    no 3\\
%ISSN 0806--2439 \hfill    April 2012}
%\end{minipage}
%\end{flushright}
%\vskip1cm
%Maximum principles for jump diffusion processes with infinite horizon}

\title{Maximum principles for jump diffusion processes with infinite horizon}

%\title{Maximum principles for jump diffusion processes with infinite horizon \thanksref{footnoteinfo}} %Automatica style
%\title{Maximum principles for jump diffusion processes with infinite horizon} 

%\thanks[footnoteinfo]{The research leading to these results has received funding from the European Research %Council under the European Community's Seventh Framework %Programme (FP7/2007-2013) / ERC grant agreement no %[228087].\newline
%2010 Mathematics Subject Classification: \newline
%Primary 93EXX; 93E20; 60J75 \newline
%Secondary 60H10; 60H20; 49J55.} %Automatica style

\thanks{The research leading to these results has received funding from the European Research Council under the European Community's Seventh Framework Programme (FP7/2007-2013) / ERC grant agreement no [228087].} %AMS style

%\author{Sven Haadem}\ead{sven.haadem@cma.uio.no}, %Automatica style
%\author{Bernt Øksendal}\ead{oksendal@math.uio.no}, %Automatica style
%\author{Frank Proske} \ead{proske@math.uio.no} %Automatica style

\author{Sven Haadem} %AMS style
\author{Bernt Øksendal} %AMS style
\author{Frank Proske} %AMS style

%\address{Center of Mathematics for Applications (CMA), University of Oslo,Box 1053 Blindern, N-0316 Oslo, Norway} %Automatica style

\date{08 May 2012}

%\begin{keyword}
%Optimal control; L\'{e}vy processes; Maximum principle; Hamiltonian; Infinite horizon; Adjoint process; Partial information 
%\end{keyword}%Automatica style

\subjclass[2010]{Primary primary class 93EXX; 93E20; 60J75; Secondary secondary classes 60H10; 60H20; 49J55} %AMS style

\keywords{Optimal control; L\'{e}vy processes; Maximum principle; Hamiltonian; Infinite Horizon; Adjoint Process; Partial Information} %AMS style

%\author{Sven Haadem\footnote{Center of Mathematics for Applications (CMA), University of Oslo,
%Box 1053 Blindern, N-0316 Oslo, Norway. Email: sven.haadem@cma.uio.no},
%Frank Proske\footnote{Center of Mathematics for Applications (CMA), University of Oslo,
%Box 1053 Blindern, N-0316 Oslo, Norway. Email: proske@math.uio.no} and
%Bernt Øksendal\footnote{Center of Mathematics for Applications (CMA), University of Oslo,
%Box 1053 Blindern, N-0316 Oslo, Norway.
%The research leading to these results has received funding from the European Research Council under the European Community's Seventh %Framework Programme (FP7/2007-2013) / ERC grant agreement no [228087].
%Email: oksendal@math.uio.no}.
%} %Pure Latex style

\date{8 May 2012}

%\textsf{Keywords: Optimal control; L\'{e}vy processes; Maximum principle; Hamiltonian; Infinite horizon; Adjoint process; Partial %information\newline\newline
%2010 Mathematics Subject Classification: \newline
%Primary 93EXX; 93E20; 60J75 \newline
%Secondary 60H10; 60H20; 49J55} %Pure Latex style

\begin{abstract}
We prove maximum principles for the problem of optimal control for a jump diffusion with infinite horizon and partial information. The results are applied to partial information optimal consumption and portfolio problems in infinite horizon.
\end{abstract}

\maketitle

%\end{frontmatter} %Automatica style

\section{Introduction}

In this paper  we consider a control problem for a performance functional
\begin{align*}
J(u) = E\left[ \int_0^{\infty} f(t, X(t),u(t), \omega)dt\right],
\end{align*}
where $X(t)$ is a controlled jump diffusion and $u(t)$ is the control process. We allow for the case where the controller only has access to partial-information. Thus, we have a infinite horizon problem with partial information. Infinite-horizon optimal control problems arise in many fields of economics, in particular in models of economic growth. Note that because of the general nature of the partial information filtration $\mathcal{E}_t$, we cannot use dynamic programming and Hamilton-Jacobi-Bellman (HJB) equations to solve the optimization problem. Thus our problem is different from partial observation control problems. 
\newline\newline
In the deterministic case the maximum principle by Pontryagin (1962) has been extended to infinite-horizon problems, but transversality conditions have not been given in gerneral. 
The 'natural' transversality condition in the infinite case would be a zero limit condition, meaning in the economic sense that one more unit of good at the limit gives no additional value. But this property is not necessarily verified. In fact \cite{halkin} provides a counterexample for a 'natural' extension of the finite-horizon transversality conditions. Thus some care is needed in the infinite horizon case.
\newline\newline
There have been a variety of articles on infinite-horizon problems. E.g. in \cite{Ververka} it is stated a 'natural' extension to infinite horizon discounted control problems.
\newline\newline
We refer to \cite{Bernt2} for more information about stochastic control in jump diffusion
markets, to \cite{Peng} for a background on infinite-horizon backward stochastic differential equations and \cite{Sydseter} for a general introduction to infinite-horizon control problems in a deterministic environment. \newline\newline
In this paper we prove several maximum principles for an infinite horizon optimal control problem with partial information. The paper is structured as follows:
In Section 4 we prove a maximum principle version of sufficient type (a verification theorem). In section 5 we give some examples, before we prove a (weak) version of a necessary type of the maximum principle in section 6. \newline\newline
In a forthcomming paper \cite{AHOP}, the case of infinite horizon for delay equations is treated.

\section{Preliminaries}
Let $B(t) = B(t,\omega) = (B_1(t,\omega),\ldots,B_n(t,\omega))$, $t \geq0$, $\omega \in \Omega$ and $\tilde{N}(dz,dt) = N(dz,dt) - \nu(dz)dt = (\tilde{N}_1(dz,dt),\ldots,\tilde{N}_n(dz,dt))$ be a n-dimensional  Brownian motion and n independent compensated Poisson random measures, respectively, on a filtered probability space $(\Omega, \mathcal{F}, \{\mathcal{F}_t\}_{t\geq 0},P)$.
Let $X(t) = X^u(t)$ be a controlled jump diffusion, described by the stochastic differential equation
\begin{align}\label{eq.X}
dX(t) &= b(t,X(t),u(t), \omega)dt + \sigma(t,X(t),u(t), \omega)dB(t) \notag\\
&+ \int_{\mathbb{R}_0^n} \theta(t,X(t),u(t),z, \omega)\tilde{N}(dz,dt); 0 \leq t < \infty \\
X(0) &= x \in \mathbb{R}^n,\notag
\end{align}
where $b:[0,\infty]\times \mathbb{R}^n \times U \times \Omega \rightarrow \mathbb{R}^n$ is adapted, $\sigma:[0,\infty]\times \mathbb{R}^n \times U \times \Omega \rightarrow  \mathbb{R}^{n\times n}$ is adapted and  $\theta:[0,\infty]\times \mathbb{R}^n \times U \times \Omega \rightarrow \mathbb{R}^{n\times n}$ is predictable (see \cite{Rogers}). See e.g. \cite{Bernt}, \cite{Bernt2} for notation and more information.
Let
\begin{align*}
 \mathcal{E}_t \subset \mathcal{F}_t,
\end{align*}
be a given subfiltration, representing the information available to the controller at time $t; t \geq0$.
The process $u(t)$ is our control, assumed to be $\{\mathcal{E}_t\}_{t\geq0}$ predicatble and with values in a set $U \subset \mathbb{R}^n$.
Let $\mathcal{A}_{\mathcal{E}}$ be our family of $\mathcal{E}_t$-predicatble controls.
Let $\mathcal{R}$ denote the set of functions $r:[0,\infty] \times \mathbb{R}_0^n \rightarrow \mathbb{R}^{n \times n} $ such that
\begin{align*}
\int_{\mathbb{R}_0^n} |\theta_{i,j}(t,x,u,z)r_{i,j}(t,z)|\nu_j(dz) < \infty \text{ for all } i,j,t,x.  
\end{align*}
Let $f:[0,\infty]\times \mathbb{R}^n \times U \times \Omega \rightarrow \mathbb{R}^n$ be adapted and assume that
\begin{align*}
E\left[ \int_0^{\infty} |f(t, X(t),u(t), \omega)|dt\right] < \infty \text{ for all } u\in \mathcal{A}_{\mathcal{E}}.\\
\end{align*}
Then we define
\begin{align*}
J(u) = E\left[ \int_0^{\infty} f(t, X(t),u(t), \omega)dt\right]
\end{align*}
to be our performance functional.
We study the problem to find $\hat{u}\in \mathcal{A}_{\mathcal{E}}$ such that
\begin{align}\label{eq.opt}
J(\hat{u}) = \sup_{u\in \mathcal{A}_{\mathcal{E}}}J(u).
\end{align}
Let us define the Hamiltonian $H:[0,T] \times \mathbb{R}^n \times U \times \mathbb{R}^n \times \mathbb{R}^{n \times n} \times \mathcal{R} \rightarrow \mathbb{R}$, by
\begin{align}
H(t,x,u,p,q,r)  &= f(t,x,u, \omega) + b^{T}(t,x,u, \omega)p + tr(\sigma^{T}(t,x,u, \omega)q) \notag \\
&+ \sum_{i,j=1}^{n} \int_{\mathbb{R}_0^n} \theta_{i,j}(t,x,u,z, \omega)r_{i,j}(t,z)\nu_{j}(dz)\label{eq:hamiltonian}. 
\end{align}
For notational convenience we will in the rest of the paper suppress any $\omega$ from the notation.
The adjoint equation in the unknown $\mathcal{F}_t$-predictable processes $(p(t),q(t),r(t,z))$ is the following
\begin{align}\label{eq:adjoint} 
dp(t) &= - \nabla_x H(t,X(t),\hat{u}(t),p(t),q(t),r(t,\cdot))dt + q(t)dB(t)\notag\\
&+ \int_{\mathbb{R}_0^n}r(t,z)\tilde{N}(dz,dt).
\end{align}

\section{Existence and Uniqueness}
In this section we prove a result about existence and uniqueness of the solution $(Y(t),Z(t),K(t,\zeta))$ of  infinite horizon BSDEs of the form;
\begin{align}
dY(t) &= -g(t,Y(t),Z(t),K(t,\cdot))dt + Z(t) dB(t) \notag \\
& + \int_{\mathbb{R}_0^n}K(t,\zeta)\tilde{N}(d\zeta,dt); 0\leq t\leq \tau, \label{eq.existence1}\\
\underset{t \to \tau}{\lim} Y(t) &= \xi(\tau)\mathbf{1}_{[0,\infty)}(\tau), \label{eq.existence2}
\end{align}
where $\tau \leq \infty$ is a given $\mathcal{F}_t$-stopping time, possibly infinite.
Our result is an extension to jumps of Theorem 4.1 in \cite{Pardoux}, Theorem 4 in \cite{Peng} and Theorem 3.1 in \cite{Yin}. It is also an extension to infinite horizon of Theorem Lemma 2.1 in \cite{Li}. See also \cite{Tang}, \cite{BSDE}, \cite{BBP} and \cite{Rong}.
We assume the following:
\begin{enumerate}
  \item The function $g: \Omega  \times \mathbb{R}_{+} \times \mathbb{R}^{k} \times \mathbb{R}^{k\times d} \times \mathcal{R} \to \mathbb{R}^{k} $ is such that there exist real numbers $\mu,\lambda, K_1$ and $K_2$, such that $K_1,K_2 >0$ and
\begin{align}\label{eq:lambda_req}
\lambda > 2\mu +K_1^2 + K_2^2.
\end{align}
We assume that the function $g$ satisfies the following requirements:
\begin{enumerate}
	\item $g(\cdot,y,z,k)$ is progessively measurable for all $y,z,k$, and
\begin{align}
 &|g(t,y,z,k(\cdot)) - g(t,y,z',k'(\cdot))| \leq K_1 \lVert z - z'\rVert \notag\\
&+ K_2 \lVert k(\cdot) - k'(\cdot)\rVert_{R},
\end{align}
where
\[
\lVert k(\cdot)\rVert_{R}^2 =  \int_{\mathbb{R}_0^n}  k^2(\zeta)\nu(d\zeta),
\] 
and $\lVert z\rVert = [Tr(zz^*)]^{\frac{1}{2}}$.
	\item 
\begin{align}
\langle y-y',g(t,y,z,k) - g(t,y',z,k)\rangle \leq \mu |y-y'|^2
\end{align}
for all $y,y',z,k$ a.s.
	\item 
\begin{align}
E \int_0^{\tau} e^{\lambda t} |g(t,0,0,0)|^2dt < \infty.
\end{align}
	\item  Finaly we require that
\begin{align}
y \mapsto g(t,y,z,k),
\end{align}
is continuous for all $t,z,k$ a.s.
	\end{enumerate}
  \item We have a final condition $\xi$, which is $\mathcal{F}_{\tau}$-measurable such that\\
$E(e^{\lambda \tau}|\xi|^2) < \infty $ and 
\begin{align}
 E \int_0^{\tau} e^{\lambda t} |g(t,\xi_t,\eta_t,\psi_t)|^2dt < \infty,
\end{align}
where $\xi_t = E(\xi|\mathcal{F}_t)$ and $\eta$,$\psi$ are s.t.
\begin{align}
 \xi = E\xi + \int_0^t \eta(s) dB_s + \int_0^t \int_{\mathbb{R}_0^n} \psi(s,\zeta) \tilde{N}(d\zeta,ds).
\end{align}
\end{enumerate}
A solution of the BSDE \eqref{eq.existence1}-\eqref{eq.existence2}, is a trippel $(Y_t,Z_t,K_t)$ of progressively measurable processes with values in $\mathbb{R} \times \mathbb{R} \times \mathbb{R}$ s.t. $Z_t$, $K_t = 0$ when $t > \tau$, 
\begin{enumerate}
	\item $E[\underset{t \geq 0}{\sup } \text{ } e^{\lambda t} |Y_t|^2 + \int_0^{\tau}e^{\lambda s} \lvert Z_s \rvert^2 ds + \int_{0}^{\tau} \int_{\mathbb{R}_0^n} e^{\lambda s} K^2(s,\zeta)\nu(d\zeta)ds] < \infty$,
	\item  $Y_t = Y_{T \wedge \tau} + \int_{t \wedge \tau}^{T \wedge \tau} g_s ds - \int_{t \wedge \tau}^{T \wedge \tau} Z_s dB_s - \int_{t \wedge \tau}^{T \wedge \tau}\int_{\mathbb{R}_0^n}K(s,\zeta)\tilde{N}(d\zeta,ds) $ for all deterministic $T < \infty$ and
	\item $Y_t = \xi$ on the set $\{t \geq \tau\}$.
\end{enumerate}
\begin{remark}[Infinite Horizon]
This incorperates the case where $\tau(\omega) = \infty$ on some set $A$ with $P(A) > 0$, possibly $P(A)=1$.
\end{remark}

\begin{theorem}[Existence and uniqueness]
Under the above conditions there exists a unique solution $(Y_t,Z_t,K_t)$ of the BSDE \eqref{eq.existence1}-\eqref{eq.existence2}, which satisfies the condition;
\begin{align}
&E [  \underset{0 \leq t \leq \tau}{\sup} e^{\lambda t} | Y_{t } |^2 + \int_{0}^{\tau} e^{\lambda s} (|Y_s|^2 + \parallel Z_s \parallel^2)ds + \int_{0}^{\tau} e^{\lambda s} \int_{\mathbb{R}_0^n} K^2(s,\zeta)\nu(d\zeta)ds]\notag \\ 
&\leq c E[e^{\lambda \tau}|\xi|^2 + \int_0^{\tau}e^{\lambda s}|g(s,0,0,0)|^2ds]\label{eq.Req},
\end{align}
for some positive number $c$.
\end{theorem}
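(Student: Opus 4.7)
The plan is to adapt the Pardoux--Peng weighted $L^2$ fixed-point scheme to the jump setting and to the random horizon $\tau$. Introduce the Banach space $\mathcal{B}_\lambda$ of $\mathcal{F}_t$-progressively measurable triples $(Y,Z,K)$, vanishing on $\{t>\tau\}$, with
\[
\|(Y,Z,K)\|_\lambda^2 := E\Big[\int_0^\tau e^{\lambda s}\bigl(|Y_s|^2 + \|Z_s\|^2 + \|K_s\|_R^2\bigr)\,ds\Big].
\]
Uniqueness and the estimate \eqref{eq.Req} will both flow from a single It\^o-based energy identity, and existence from a contraction on $\mathcal{B}_\lambda$ built via the martingale representation theorem; the supremum term in \eqref{eq.Req} is then recovered by Burkholder--Davis--Gundy.

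The first technical step is the a priori estimate. Apply It\^o's formula to $e^{\lambda s}|Y_s|^2$ on $[t\wedge\tau,\,T\wedge\tau]$ for deterministic $T<\infty$; the quadratic variations reproduce $e^{\lambda s}\|Z_s\|^2\,ds$ and $e^{\lambda s}\|K_s\|_R^2\,ds$ verbatim. Decompose the cross-term
\[
2\langle Y_s,g_s\rangle = 2\langle Y_s,g(s,Y_s,Z_s,K_s)-g(s,0,Z_s,K_s)\rangle + 2\langle Y_s,g(s,0,Z_s,K_s)-g(s,0,0,0)\rangle + 2\langle Y_s,g(s,0,0,0)\rangle;
\]
the first piece is bounded by $2\mu|Y_s|^2$ via monotonicity, the second by $(\alpha K_1^2+\beta K_2^2)|Y_s|^2 + \alpha^{-1}\|Z_s\|^2 + \beta^{-1}\|K_s\|_R^2$ via Young with $\alpha,\beta$ slightly larger than $1$, and the third by $\varepsilon|Y_s|^2+\varepsilon^{-1}|g(s,0,0,0)|^2$. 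The strict gap $\lambda > 2\mu + K_1^2 + K_2^2$ is \emph{exactly} what allows $\alpha,\beta$ and $\varepsilon$ to be chosen so that $\alpha^{-1},\beta^{-1}<1$ while the residual coefficient on $|Y_s|^2$ stays strictly positive. Letting $T\to\infty$ and invoking hypothesis (2) to pin down $e^{\lambda(T\wedge\tau)}|Y_{T\wedge\tau}|^2$ produces \eqref{eq.Req}; the same computation for the difference of two solutions sharing $\xi$ yields uniqueness.

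For existence I would first handle the case in which $g$ is also Lipschitz in $y$, and then reduce the general monotone-continuous case by an inf-convolution regularisation $g_n$ (which preserves $\mu$, $K_1$, $K_2$ and is Lipschitz in $y$ with constant $n$) together with a monotonicity-based limit. In the Lipschitz case, define $\Phi:\mathcal{B}_\lambda\to\mathcal{B}_\lambda$ by freezing the inputs in the driver: $\Phi(U,V,W)=(Y,Z,K)$ where $Y_t := E\bigl[\xi+\int_{t\wedge\tau}^{\tau}g(s,U_s,V_s,W_s)\,ds\mid\mathcal{F}_t\bigr]$, and $Z,K$ arise from the Brownian--Poisson representation of the associated square-integrable martingale (the needed integrability follows from hypothesis (2) together with the Lipschitz controls). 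The very a priori estimate above, applied to $\Phi(U,V,W)-\Phi(U',V',W')$, shows $\Phi$ is a strict contraction on $\mathcal{B}_\lambda$, giving a unique fixed point. Passing $n\to\infty$ on the solutions of the $g_n$-equations, the uniform estimate \eqref{eq.Req} together with the monotonicity assumption yields strong convergence of $Y^n$ and weak convergence of $Z^n,K^n$, whose joint limit solves the original BSDE.

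The principal obstacle is the infinite horizon together with the possibility $P(\tau=\infty)>0$: one must verify that $e^{\lambda(T\wedge\tau)}|Y_{T\wedge\tau}|^2$ is uniformly integrable and converges to $e^{\lambda\tau}|\xi|^2\mathbf{1}_{\{\tau<\infty\}}$ as $T\to\infty$, while on $\{\tau=\infty\}$ the weighted quantity $e^{\lambda T}|Y_T|^2$ vanishes. Both hinge on the strict gap $\lambda>2\mu+K_1^2+K_2^2$, which forces $|Y_s|^2$ to decay faster than $e^{-\lambda s}$; without this slack the scheme collapses. A secondary but routine difficulty is that the driver is only continuous in $y$, so the regularisation and limiting argument described above replace a direct Picard iteration.
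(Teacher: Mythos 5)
Your uniqueness argument and a priori estimate are essentially the paper's: apply It\^o's formula to $e^{\lambda s}|Y_s|^2$ on $[t\wedge\tau,T\wedge\tau]$, split the cross term using monotonicity in $y$, Lipschitz continuity in $(z,k)$ and Young's inequality, and absorb everything using the strict gap $\lambda>2\mu+K_1^2+K_2^2$; the paper even uses the same trick of running the estimate with a second exponent $\lambda'\in(2\mu+K_1^2+K_2^2,\lambda)$ to kill the terminal term as $T\to\infty$. That part of your proposal is sound and matches the source.

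The existence argument, however, has a genuine gap. You propose to regularise $g$ in $y$ by inf-convolution so that it becomes Lipschitz with constant $n$, and then to run a Picard iteration in which \emph{all three} inputs $(U,V,W)$ are frozen in the driver, claiming the a priori estimate makes $\Phi$ a strict contraction on $\mathcal{B}_\lambda$. Once $y$ is frozen, the monotonicity constant $\mu$ is no longer available: the difference $2\langle \bar Y_s,\,g_n(s,U_s,\cdot)-g_n(s,U_s',\cdot)\rangle$ must be controlled by the Lipschitz constant of $g_n$ in $y$, which is $n$. On a finite horizon one escapes this by taking the exponential weight as large as needed, but here $\lambda$ is \emph{fixed} by the standing hypotheses (the integrability of $\xi$ and of $g(\cdot,0,0,0)$ is only assumed for this $\lambda$), so the contraction constant blows up as $n\to\infty$ and the scheme does not close; there is then no uniform estimate on which to base the limit $n\to\infty$. (A secondary issue: inf-convolution preserving the one-sided constant $\mu$ is a scalar device and does not directly apply to the vector-valued $g:\mathbb{R}^k\to\mathbb{R}^k$.) The standard repairs are either to freeze only $(z,k)$ and solve the resulting $y$-monotone equation by other means, contracting only in $(Z,K)$ where the gap $\lambda>2\mu+K_1^2+K_2^2$ suffices, or to do what the paper actually does: truncate the horizon at $n$, solve the finite-horizon BSDE with terminal value $E[\xi|\mathcal{F}_n]$ and driver $\mathbf{1}_{[0,\tau]}g$, extend by the martingale representation of $\xi$ for $t>n$, and show the resulting sequence $(Y^n,Z^n,K^n)$ is Cauchy in the weighted norm using the same It\^o estimate, the Cauchy defect being controlled by $E\big[\int_{n\wedge\tau}^{\tau}e^{\lambda s}|g(s,\xi_s,\eta_s,\psi_s)|^2ds\big]\to 0$. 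You should replace your fixed-point step by one of these.
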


\begin{proof}
First, let us show uniqueness. Let $(Y,Z,K)$ and $(Y',Z',K')$ be two solutions satisfying \eqref{eq.Req} and let $(\bar{Y},\bar{Z},\bar{K}) = (Y-Y',Z-Z',K-K')$. From It$\bar{o}$'s Lemma we have that
\begin{align*}
e^{\lambda t \wedge \tau} |\bar{Y}_{t \wedge \tau} |^2 &+ \int_{t \wedge \tau}^{T \wedge \tau} \Bigg[e^{\lambda s} ( \lambda |\bar{Y}_s|^2 + \lVert \bar{Z}_s \rVert^2)
+  e^{\lambda s}\int_{\mathbb{R}_0^n}  \bar{K}^2(s,\zeta)\nu(d\zeta)\Bigg]ds\\ 
&\leq e^{\lambda s}|\bar{Y}_{T}|^2 + 2\int_{t \wedge \tau}^{T \wedge \tau} \Bigg[e^{\lambda s} (\mu |\bar{Y}_s|^2 + K_1  |\bar{Y}_s|\times\lVert \bar{Z}_s \rVert) \\
&+ K_2|\bar{Y}_s|e^{\lambda s}(\int_{\mathbb{R}_0^n}  \bar{K}^2(s,\zeta)\nu(d\zeta))^{\frac{1}{2}}\Bigg]ds\\ 
&- 2\int_{t \wedge \tau}^{T \wedge \tau} e^{\lambda s} \langle \bar{Y}_s,\bar{Z}_s dB_s \rangle\\ 
&- \int_{t \wedge \tau}^{T \wedge \tau} e^{\lambda s}\int_{\mathbb{R}_0^n} \left[\bar{K}^2(s,\zeta) + 2\bar{K}(s,\zeta)  \bar{Y}(s) \right] \tilde{N}(d\zeta,ds).
\end{align*}
Combining the above with the fact that $2ab \leq a^2 + b^2$ we deduce since $\lambda > 2\mu +K_1^2 + K_2^2$, that for $t < T$
\[
E [ e^{\lambda t \wedge \tau} |\bar{Y}_{t \wedge \tau} |^2] \leq E[e^{\lambda T \wedge \tau} |\bar{Y}_{T} |^2]
\]
the same holds with $\lambda$ replaced by $\lambda^{'}$, with $\lambda > \lambda' > 2\mu +K_1^2 + K_2^2$ 
\[
E \Big[ e^{\lambda t \wedge \tau} |\bar{Y}_{t \wedge \tau} |^2\Big] \leq e^{(\lambda-\lambda')T}E \Big[e^{\lambda T \wedge \tau} |\bar{Y}_{T} |^2 \mathbf{1}_{\{T < \tau\}}\Big]
\]
Condition \eqref{eq.Req} implies that the second factor on the right hand side remains bounded as $T \to \infty$, while the first factor tends to $0$. This proves uniqueness.\newline\newline
\textit{Proof of existence}. For each $n \in \mathrm{N}$ we construct a solution $(Y_t^n,Z^n_t,K^n_t) $ of the BSDE 
\[
 Y_t^n = \xi + \int_{t \wedge \tau}^{n \wedge \tau} g(s,Y_s^n,Z_s^n,K^n_s)ds 
-\int_{t \wedge \tau}^{\tau}  Z^n_s dB_s 
- \int_{t \wedge \tau}^{\tau}  \int_{\mathbb{R}_0^n}K^n(s,\zeta)\tilde{N}(d\zeta,ds) 
\]
by letting $\{(Y_t^n,Z^n_t,K^n_t); 0 \leq t\leq n \}$  be defined as a solution of the following BSDE:
\[
 Y_t^n = E[\xi|\mathcal{F}_n] + \int_{t }^{n } \mathbf{1}_{[0,\tau]}(s)g(s,Y_s^n,Z_s^n, K^n_s)ds
-\int_{t }^{n }  Z^n_s dB_s 
- \int_{t }^{n }  \int_{\mathbb{R}_0^n}K^n(s,\zeta)\tilde{N}(d\zeta,ds) 
\]
for $0\leq t\leq n$ and $\{(Y_t^n,Z^n_t,K^n_t); t\geq n \}$ defined by
\[
 Y_t^n = \xi_t,
\]
\[
 Z^n_t = \eta_t, 
\]
and
\[
 K^n_t = \psi_t, 
\]
for $t> n$. Next, we find some a priori estimates for the sequence $(Y^n,Z^n,K^n)$. For any $\epsilon > 0$, $\rho < 1$ and $\alpha$ we have for all $t \geq 0, y \in \mathbb{R}^{k}$, $z\in \mathbb{R}^{k\times d}$, $k \in \mathcal{R}$  with $c= \frac{1}{\epsilon}$,
\begin{align*}
&2 \langle y,g(t,y,z,k)\rangle = 2 \langle y,g(t,y,z,k) - g(t,0,z,k) \rangle \\
&+ 2 \langle y,g(t,0,z,k) - g(t,0,0,0) \rangle + 2 \langle y,g(t,0,0,0) \rangle  \\
&\leq (2\mu + \frac{1}{\rho}K_1^2 + \frac{1}{\alpha}K_2^2 + \epsilon)|y|^2
+ \rho \parallel z \parallel^2 + \alpha\int_{\mathbb{R}_0^n}  k^2(\zeta)\nu(d\zeta)\\
&+ c|g(t,0,0,0)|^2.
\end{align*}
From It$\bar{o}$'s Lemma we have
\begin{align*}
e^{\lambda t \wedge \tau} |Y_{t \wedge \tau}^n |^2 &+ \int_{t \wedge \tau}^{\tau} \left[e^{\lambda s} ( \bar{\lambda} |Y_s^n|^2 + \bar{\rho} \parallel Z_s^n \parallel^2 )
+ \bar{\alpha}\int_{t \wedge \tau}^{\tau} e^{\lambda s}\int_{\mathbb{R}_0^n}  (K^n)^2(s,\zeta)\nu(d\zeta) \right]ds\\ 
&\leq e^{\lambda s}|\eta|^2 + c\int_{t \wedge \tau}^{\tau} e^{\lambda s} |g(s,0,0,0)|^2ds\\ 
&- 2\int_{t \wedge \tau}^{\tau} e^{\lambda s} <Y_s^n,Z_s^ndB_s>\\
&- \int_{t \wedge \tau}^{\tau} e^{\lambda s}\int_{\mathbb{R}_0^n} \left[(K^n)^2(s,\zeta) + 2K^n(s,\zeta)  Y^n(s) \right] \tilde{N}(d\zeta,ds),
\end{align*}
with $\bar{\lambda} = \lambda - 2\mu - \frac{1}{\rho}K_1^2 - \frac{1}{\alpha}K_2^2 - \epsilon > 0$, $\bar{\rho} = 1-\rho >0$ and $\bar{\alpha} = 1 - \alpha$. 
From this and the matingale inequality it follows that
\begin{align*}
&E \left[ \underset{t\geq s}{\sup}\text{ } e^{\lambda t \wedge \tau} |Y_{t \wedge \tau}^n |^2 + \int_{s \wedge \tau}^{\tau} \Big[e^{\lambda r} (|Y_r^n|^2 + \parallel Z_r^n \parallel^2) 
+  e^{\lambda r}\int_{\mathbb{R}_0^n}  (K^n)^2(r,\zeta)\nu(d\zeta)\Big]dr\right]\\
&\leq D E\left[ e^{\lambda \tau}|\xi|^2 + \int_{s \wedge \tau}^{\tau} e^{\lambda r}|g(r,0,0,0)|^2dr \right].
\end{align*}
Let $m > n$ and define $\Delta Y_t := Y_t^m - Y_t^n$, $\Delta Z_t := Z_t^m - Z_t^n$ and $\Delta K_t := K_t^m - K_t^n$, so that for $n \leq t \leq m$,
\[
 \Delta Y_t = \int_{t \wedge \tau}^{m \wedge \tau} g(s,Y_s^m,Z_s^m,K_s^m) ds
-\int_{t \wedge \tau}^{m \wedge \tau} \Delta Z_s dB_s 
- \int_{t \wedge \tau}^{m \wedge \tau}  \int_{\mathbb{R}_0^n} \Delta K(s,\zeta)\tilde{N}(d\zeta,ds).
\]
It then follows that
\begin{align*}
e^{\lambda t \wedge \tau} |\Delta Y_{t \wedge \tau} |^2 &+ \int_{t \wedge \tau}^{m \wedge \tau} \Big\{e^{\lambda s} ( \lambda |\Delta Y_s|^2 +  \parallel \Delta Z_s \parallel^2) 
+  e^{\lambda s}\int_{\mathbb{R}_0^n}  (\Delta K)^2(s,\zeta)\nu(d\zeta)\Big\}ds\\ 
& = \int_{t \wedge \tau}^{m \wedge \tau}  e^{\lambda s} \langle \Delta Y_s,g(s,Y_s^m,Z_s^m,K_s^m) \rangle ds\\
&- 2 \int_{t \wedge \tau}^{m \wedge \tau}  e^{\lambda s} \langle \Delta Y_s,\Delta Z_s dB_s \rangle \\
&- \int_{t \wedge \tau}^{m \wedge \tau} e^{\lambda s}\int_{\mathbb{R}_0^n}\left[(\Delta K)^2(s,\zeta) + 2\Delta K(s,\zeta)\Delta Y(s) \right]\tilde{N}(d\zeta,ds)\\
&2\leq e^{\lambda s}|\eta|^2 c\int_{t \wedge \tau}^{m \wedge \tau} e^{\lambda s} |g(s,0,0,0)|^2ds 
- 2\int_{t \wedge \tau}^{m \wedge \tau} e^{\lambda s} \langle \Delta Y_s,\Delta Z_s dB_s \rangle \\ 
&- \int_{t \wedge \tau}^{m \wedge \tau} e^{\lambda s}\int_{\mathbb{R}_0^n} \left[(\Delta K)^2(s,\zeta) + 2\Delta K(s,\zeta)\Delta Y(s) \right]\bar{N}(d\zeta,ds).
\end{align*}
From the same arguments as above
\begin{align*}
&E \Bigg[  \underset{n \leq t \leq m}{\sup} e^{\lambda t \wedge \tau} |\Delta Y_{t \wedge \tau} |^2 \\
&+ \int_{n \wedge \tau}^{m \wedge \tau} \Big\{e^{\lambda s} (|\Delta Y_s|^2 + \parallel \Delta Z_s \parallel^2) +  e^{\lambda s}\int_{\mathbb{R}_0^n}  (\Delta K)^2(s,\zeta)\nu(d\zeta)\Big\}ds\Bigg]\\
&\leq 4E\left[\int_{n \wedge \tau}^{\tau} e^{\lambda s}|g(s,\xi,\eta,\psi)|^2ds\right]. 
\end{align*}
The last term in the above equation goes to zero as $n \to \infty$. Now, for $t\leq n$
\begin{align*}
\Delta Y_t &= \Delta Y_n + \int_{t \wedge \tau}^{n \wedge \tau} \Big\{g(s,Y_s^m,Z_s^m,K_s^m) - g(s,Y_s^n,Z_s^n,K_s^n)\Big\} ds -\int_{t \wedge \tau}^{n \wedge\tau} \Delta Z_s dB_s\\
&- \int_{t \wedge \tau}^{n \wedge \tau}  \int_{\mathbb{R}_0^n}\Delta K(s,\zeta)\tilde{N}(d\zeta,ds). 
\end{align*}
Using the same argument as in the case of uniqueness, we have that
\begin{align*}
 E[e^{\lambda t \wedge \tau} |\Delta Y_{t\wedge \tau}|^2] 
\leq E[e^{\lambda t \wedge \tau} |\Delta Y_n|^2] 
\leq c E\left[ \int_{n \wedge \tau}^{\tau} e^{\lambda s}|g(s,\xi_s,\eta_s,\psi_s)|^2ds\right].
\end{align*}
It now follows that the sequence $(Y^n,Z^n, K^n)$ is Cauchy in the norm
\begin{align*}
\lVert (Y,Z,K) \rVert &:= E [  \underset{0 \leq t \leq \tau}{\sup} e^{\lambda t} | Y_{t } |^2 + \int_{0}^{\tau} e^{\lambda s} (|Y_s|^2 + \parallel Z_s \parallel^2)ds\\
&+ \int_{0}^{\tau} e^{\lambda s}\int_{\mathbb{R}_0^n}  K^2(s,\zeta)\nu(d\zeta)ds].
\end{align*}
So, we have that there is an unique solution to the BSDE \eqref{eq.existence1}-\eqref{eq.existence2}, which satisfies for all  $\lambda > 2\mu +K_1^2 + K_2^2$, the condition
\begin{align*}
&E \left[  \underset{0 \leq t \leq \tau}{\sup} e^{\lambda t} | Y_{t } |^2 + \int_{0}^{\tau} e^{\lambda s} (|Y_s|^2 + \parallel Z_s \parallel^2)ds + \int_{0}^{\tau} e^{\lambda s}\int_{\mathbb{R}_0^n}  K^2(s,\zeta)\nu(d\zeta)ds\right]\\ 
&\leq c E\left[e^{\lambda \tau}|\xi|^2 + \int_0^{\tau}e^{\lambda s}|g(s,0,0,0)|^2ds\right].
\end{align*}

\end{proof}

\section{Optimal control with partial information and infinite horizon}
Now, let us get back to the problem of maximizing the performance functional
\begin{align*}
J(u) = E\left[ \int_0^{\infty} f(t, X(t),u(t))dt\right],
\end{align*}
where $X(t)$ is of the form \eqref{eq.X}. Our aim is to find a $\hat{u}\in \mathcal{A}_{\mathcal{E}}$ such that
\begin{align*}
J(\hat{u}) = \sup_{u\in \mathcal{A}_{\mathcal{E}}}J(u),
\end{align*}
where $u(t)$ is our previsible control adapted to a subfiltration
\begin{align*}
 \mathcal{E}_t \subset \mathcal{F}_t,
\end{align*}
with values in a set $U \subset \mathbb{R}^n$.
Let $H$ be the Hamiltonian defined by \eqref{eq:hamiltonian} and $p$ the solution to the adjoint equation  \eqref{eq:adjoint}. Then we have the following maximum principle;
\begin{theorem}[Sufficient Infinite Horizon Maximum Principle]
Let $\hat{u} \in \mathcal{A}_{\mathcal{E}}$ and let  $(\hat{p}(t),\hat{q}(t),\hat{r}(t,z))$ be an associated solution to the  equation \eqref{eq:adjoint}. Assume that for all $u\in \mathcal{A}_{\mathcal{E}}$ the following terminal condition holds:
\begin{align}\label{eq:lim}
0 \leq E\left[ \overline{\lim_{t \to \infty}} [ \hat{p}(t)^{T}(X(t) - \hat{X}(t)) ]\right] < \infty. 
\end{align}
Moreover, assume that $H(t,x,u,\hat{p}(t),\hat{q}(t),\hat{r}(t,\cdot))$ is concave in $x$ and $u$ and
\begin{align}
E\left[ H(t,\hat{X}(t),\hat{u}(t),\hat{p}(t),\hat{q}(t),\hat{r}(t,\cdot)) | \mathcal{E}_t\right] \notag \\
= \max_{u \in U} E\left[ H(t,\hat{X}(t),u,\hat{p}(t),\hat{q}(t),\hat{r}(t,\cdot)) | \mathcal{E}_t\right]. \label{eq:max}
\end{align}
In addition we assume that
\begin{align}
&E\left[\int_0^{\infty} (\hat{X}(t) - X^{u}(t))^T [\hat{q}\hat{q}^T + \int_{\mathbb{R}_0^n}\hat{r}\hat{r}^T(t,z) \nu (dz)] (\hat{X}(t) - X^{u}(t))dt\right] < \infty, \label{eq:mart1} \\
&E\left[\int_0^{\infty} \hat{p}(t)^T [\sigma\sigma^T(t,X(t),u(t)) + \int_{\mathbb{R}_0^n}\theta \theta^T(t,X(t),u(t)) \nu (dz)] p(t)dt\right] < \infty,   \label{eq:mart2} \\
&E\left[|\nabla_u H(t,\hat{X}(t),\hat{u}(t),\hat{p}(t),\hat{q}(t),\hat{r}(t,\cdot))|^2\right] < \infty, \label{eq:derivL2}
\end{align}
and that
\begin{align}
&E\left[ \int_0^{\infty} |H(s,X(s),u(s),\hat{p}(s),\hat{q}(s),\hat{r}(s,\cdot))|\right] < \infty
\end{align}
for all $u$.\newline
Then we have that $\hat{u}(t)$ is optimal.
\end{theorem}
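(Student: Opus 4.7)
I would run the classical sufficient-maximum-principle calculation, modified in two places forced by this setting: the terminal value of the adjoint-state product is controlled as $T\to\infty$ by \eqref{eq:lim} rather than by a boundary condition at $T$, and the pointwise maximum of $H$ is replaced by its $\mathcal{E}_t$-conditional version \eqref{eq:max}. Fix $u\in\mathcal{A}_{\mathcal{E}}$, set $X=X^u$, $\tilde X(t)=\hat X(t)-X(t)$, and let $\hat b,\hat\sigma,\hat\theta,\hat f,\hat H$ (resp.\ $b,\sigma,\theta,f,H$) denote the coefficients and Hamiltonian evaluated at $(t,\hat X(t),\hat u(t),\hat p,\hat q,\hat r)$ (resp.\ at $(t,X(t),u(t),\hat p,\hat q,\hat r)$). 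The definition \eqref{eq:hamiltonian} gives
$$\hat f-f=(\hat H-H)-(\hat b-b)^T\hat p-\mathrm{tr}\bigl((\hat\sigma-\sigma)^T\hat q\bigr)-\sum_{i,j=1}^n\int_{\mathbb{R}_0^n}(\hat\theta-\theta)_{i,j}\hat r_{i,j}\,\nu_j(dz).$$

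The central step is It$\hat{\text o}$'s product formula applied to $\hat p(t)^T\tilde X(t)$ on $[0,T]$. Using the adjoint dynamics \eqref{eq:adjoint} (drift $-\nabla_x\hat H$), the dynamics of $\tilde X$ (drift $\hat b-b$), and the continuous and jump covariation corrections, all three coefficient-difference terms above reappear with signs that will cancel them against the rewrite of $\hat f-f$. Taking expectations, and using \eqref{eq:mart1}--\eqref{eq:mart2} together with Burkholder--Davis--Gundy to guarantee that the $dB$- and $\tilde N$-integrals are genuine mean-zero martingales, together with $\tilde X(0)=0$, yields
$$E\bigl[\hat p(T)^T\tilde X(T)\bigr]=E\!\int_0^T\!\Bigl[-\tilde X^T\nabla_x\hat H+\hat p^T(\hat b-b)+\mathrm{tr}\bigl((\hat\sigma-\sigma)^T\hat q\bigr)+\sum_{i,j}\!\int(\hat\theta-\theta)_{i,j}\hat r_{i,j}\,\nu_j(dz)\Bigr]dt.$$
Substituting into $J(\hat u)-J(u)=E\!\int_0^\infty(\hat f-f)dt$ and passing to the limit $T\to\infty$ (legal by the $L^1$-integrability of $f$ along both trajectories) gives
$$J(\hat u)-J(u)=E\!\int_0^\infty\bigl(\hat H-H-\nabla_x\hat H^T\tilde X\bigr)dt-\lim_{T\to\infty}E\bigl[\hat p(T)^T\tilde X(T)\bigr].$$

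The remainder is a clean application of the hypotheses. Concavity of $H$ in $(x,u)$ gives the pointwise lower bound $\hat H-H-\nabla_x\hat H^T\tilde X\geq\nabla_u\hat H^T(\hat u-u)$; concavity of $u\mapsto E[H(t,\hat X,u,\hat p,\hat q,\hat r)\mid\mathcal{E}_t]$ combined with the maximum condition \eqref{eq:max} and the $\mathcal{E}_t$-measurability of $u(t),\hat u(t)$ forces the first-order inequality $E[\nabla_u\hat H^T(\hat u(t)-u(t))\mid\mathcal{E}_t]\geq 0$, with \eqref{eq:derivL2} ensuring the gradient is in $L^2$ so that the conditional expectation is meaningful and integrable in $t$. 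Taking the unconditional expectation makes the Lebesgue integral nonnegative. For the boundary term, \eqref{eq:lim} is used via a Fatou-type argument (the finiteness half supplies the integrable envelope, the nonnegativity half delivers the sign) to conclude $-\lim_TE[\hat p(T)^T\tilde X(T)]=\lim_TE[\hat p(T)^T(X(T)-\hat X(T))]\geq 0$. Combining the two estimates yields $J(\hat u)\geq J(u)$, which is the claim.

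The main obstacle is precisely this boundary-at-infinity term $\lim_TE[\hat p(T)^T\tilde X(T)]$. In the finite-horizon version one uses $\hat p(T)=0$ and the term vanishes for free; here it persists, and converting the pointwise-in-$\omega$ hypothesis \eqref{eq:lim} into the corresponding inequality for $\lim E[\cdot]$ (or $\liminf$/$\limsup$) is the genuinely new step, which is also why both halves of \eqref{eq:lim} are imposed. This is additionally the reason \eqref{eq:mart1}--\eqref{eq:mart2} are needed: without uniform-in-$T$ second-moment control on the martingale parts of $\tilde X$ and $\hat p$, one cannot even produce the mean-zero It$\hat{\text o}$ identity at finite $T$, let alone justify the interchange of limit and expectation at infinity.
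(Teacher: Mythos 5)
Your proposal is correct and follows essentially the same route as the paper: the same rewriting of $f-\hat f$ via the Hamiltonian, the same It\^o product-formula computation on $\hat p^T(X-\hat X)$ with \eqref{eq:mart1}--\eqref{eq:mart2} killing the martingale terms, the same concavity plus conditional-first-order argument from \eqref{eq:max} and \eqref{eq:derivL2}, and the same use of \eqref{eq:lim} to control the boundary term at infinity. The only cosmetic difference is that you work on $[0,T]$ and pass $T\to\infty$ while the paper places the $\overline{\lim}$ directly inside the expectation; the interchange issue you flag is present in both treatments and is handled no more rigorously in the paper than in your sketch.
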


\begin{remark}
Note that, since $p(t)$ has the economic interpretation as the marginal value of the resource (alternativly the shadow price if representing an outside resource), the requirement
\begin{align*}
0 \leq E\left[ \overline{\lim_{t \to \infty}} [ \hat{p}(t)^{T}(X(t) - \hat{X}(t)) ]\right] < \infty,
\end{align*}
has the economic interpretation that if the marginal value is positive at infinity we want to have as little resources left as possible.
\end{remark}

\begin{remark}
 The requirement in the finite horizon case that $p(T) = 0$ does not translate into $\underset{T \to \infty}{\lim p(T)} = 0$ as was shown in the deterministic case in \cite{halkin}.
\end{remark}

\begin{proof}
Let $I^{\infty} := E[ \int_0^{\infty} (f(t, X(t),u(t)) - f(t, \hat{X}(t),\hat{u}(t)))dt] = J(u) -J(\hat{u})$.
Then $I^{\infty} = I_{1}^{\infty} - I_{2}^{\infty} - I_{3}^{\infty} - I_{4}^{\infty}$, where
\begin{align}
I_{1}^{\infty} &:= E\Biggl[ \int_0^{\infty} (H(s,X(s),u(s),\hat{p}(s),\hat{q}(s),\hat{r}(s,\cdot)) \notag\\ &-H(t,\hat{X}(s),\hat{u}(t),\hat{p}(s),\hat{q}(s),\hat{r}(s,\cdot)))ds \Biggr] \label{eq:I11}, \notag\\
I_{2}^{\infty} &:= E\left[ \int_0^{\infty} \hat{p}(s)^T(b(s,X(s),u(s))-\hat{b}(s,\hat{X}(s),\hat{u}(s)))ds\right] \notag, \\
I_{3}^{\infty} &:= E\left[\int_0^{\infty} \text{tr}[q(s)^T (\sigma(s,X(s),u(s)) - \hat{\sigma}(s,\hat{X}(s),\hat{u}(s)))]ds\right] \notag, 
\end{align}
and
\begin{align}
I_{4}^{\infty} &:= E\Bigg[\int_0^{\infty} \sum_{i,j} \int_{\mathbb{R}_0^n} (\theta(s,X(s),u(s),z)\notag\\
&- \hat{\theta}(s,\hat{X}(s),\hat{u}(s),z))^T \hat{r}_{i,j}(s,z)\nu_{j}(dz)ds \Bigg]. \notag
\end{align}
We have from concavity that
\begin{align}
&H(t,X(t),u(t),\hat{p}(t),\hat{q}(t),\hat{r}(t,\cdot)) - H(t,\hat{X}(t),\hat{u}(t),\hat{p}(t),\hat{q}(t),\hat{r}(t,\cdot))\\
&\leq \nabla_x H(t,\hat{X}(t),\hat{u}(t),\hat{p}(t),\hat{q}(t),\hat{r}(t,\cdot))^T(X(t) - \hat{X}(t)) \notag \\
&+ \nabla_u H(t,\hat{X}(t),\hat{u}(t),\hat{p}(t),\hat{q}(t),\hat{r}(t,\cdot))^T(u(t) - \hat{u}(t)). \label{eq:concave}
\end{align}
Then we have from \eqref{eq:max},\eqref{eq:derivL2} and that $u(t)$ is adapted to $\mathcal{E}_t$,
\begin{align}
0 &\geq \nabla_u E\left[ H(t,\hat{X}(t),u,\hat{p}(t),\hat{q}(t),\hat{r}(t,\cdot)) | \mathcal{E}_t\right]^T_{u=\hat{u}(t)}(u(t) - \hat{u}(t)) \notag\\
&= E\left[ \nabla_u H(t,\hat{X}(t),\hat{u}(t),\hat{p}(t),\hat{q}(t),\hat{r}(t,\cdot))^T(u(t) - \hat{u}(t)) | \mathcal{E}_t\right]. \label{eq:equality}
\end{align}
Combining \eqref{eq:adjoint}, \eqref{eq:mart1}, \eqref{eq:I11}, \eqref{eq:concave} and \eqref{eq:equality}
\begin{align*}
I_{1}^{\infty} &\leq E\left[ \int_0^{\infty} \nabla_x H(t,\hat{X}(s),\hat{u}(s),\hat{p}(s),\hat{q}(s),\hat{r}(s,\cdot))^T(X(s) - \hat{X}(s)) ds\right]\\
&= E\left[ \int_0^{\infty} (X(s) - \hat{X}(s))^T d\hat{p}(s) \right] =: -J_1.
\end{align*}
Now, using \eqref{eq:lim} and It$\bar{o}$'s formula
\begin{align*}
0 &\leq E\left[ \overline{\lim_{t \to \infty}} [ \hat{p}(t)^{T}(X(t) - \hat{X}(t)) ]\right] \\
&= E\Biggl[\overline{\lim_{t \to \infty}} \Big[ \int_0^t \hat{p}(s)^T(b(s,X(s),u(s))-\hat{b}(s,\hat{X}(s),\hat{u}(s)))ds\\ 
&+ \int_0^t \hat{p}(s)^T(\sigma(s,X(s),u(s)) - \hat{\sigma}(s,\hat{X}(s),\hat{u}(s))) dB(s)\\
&+ \int_0^t \int_{\mathbb{R}_0^n} \hat{p}(s)^T (\theta(s,X(s),u(s),z) - \hat{\theta}(s,\hat{X}(s),\hat{u}(s),z))\tilde{N}(dz,ds)\\
&+  \int_0^t (X(s) - \hat{X}(s))^T (-\nabla_x \hat{H}(s,\hat{X}(s),\hat{u}(s),\hat{p}(s),\hat{q}(s),\hat{r}(s,\cdot)))ds\\ 
&+ \int_0^t \hat{q}(s)^T(X(s) - \hat{X}(s)) dB(s)\\
&+ \int_0^t \int_{\mathbb{R}_0^n}\hat{r}(s,z)(X(s) - \hat{X}(s))\tilde{N}(dz,ds)\\ 
&+ \int_0^t \text{tr}\left[\hat{q}(s)^T (\sigma(s,X(s),u(s)) - \hat{\sigma}(s,\hat{X}(s),\hat{u}(s)))\right]ds\\
&+ \int_0^t \sum_{i,j} \int_{\mathbb{R}_0^n} (\theta(s,X(s),u(s),z) - \hat{\theta}(s,\hat{X}(s),\hat{u}(s),z))^T \hat{r}_{i,j}(s,z)\nu_{j}(dz)ds\\
&+ \int_0^t  \int_{\mathbb{R}_0^n} (\theta(s,X(s),u(s),z) - \hat{\theta}(s,\hat{X}(s),\hat{u}(s),z))^T \hat{r}(s,z)\tilde{N}(dz,ds) \Big]\Biggr]\\
\end{align*}
From \eqref{eq:mart1}, \eqref{eq:mart2}, we have that
\begin{align*}
0 &\leq E\Biggl[\overline{\lim_{t \to \infty}} \Big[ \int_0^t \hat{p}(s)^T(b(s,X(s),u(s))-\hat{b}(s,\hat{X}(s),\hat{u}(s)))ds\\ 
&+  \int_0^t (X(s) - \hat{X}(s))^T (-\nabla_x \hat{H}(s,\hat{X}(s),\hat{u}(s),\hat{p}(s),\hat{q}(s),\hat{r}(s,\cdot)))ds \\
&+ \int_0^t \text{tr}\left[\hat{q}(s)^T (\sigma(s,X(s),u(s)) - \hat{\sigma}(s,\hat{X}(s),\hat{u}(s)))\right]ds\\ 
&+ \int_0^t \sum_{i,j} \int_{\mathbb{R}_0^n} \left(\theta(s,X(s),u(s),z) - \hat{\theta}(s,\hat{X}(s),\hat{u}(s),z)\right)^T \hat{r}_{i,j}(s,z)\nu_{j}(dz)ds \Big]\Biggr]\\
&= E\Biggl[ \int_0^{\infty} \hat{p}(s)^T(b(s,X(s),u(s))-\hat{b}(s,X(s),u(s)))ds\\ 
&+  \int_0^{\infty} (X(s) - \hat{X}(s))^T (-\nabla_x \hat{H}(s,X(s),u(s),p(s),q(s),r(s,\cdot)))ds \\
&+ \int_0^{\infty} \text{tr}\left[\hat{q}(s)^T (\sigma(s,X(s),u(s)) - \hat{\sigma}(s,\hat{X}(s),\hat{u}(s)))\right]ds\\
&+ \int_0^{\infty} \sum_{i,j} \int_{\mathbb{R}_0^n} (\theta(s,X(s),u(s),z) - \hat{\theta}(s,\hat{X}(s),\hat{u}(s),z))^T \hat{r}_{i,j}(s,z)\nu_{j}(dz)ds \Biggr]\\
&= I_{1,2}^{\infty} + J_{1}^{\infty} + I_{1,3}^{\infty} + I_{1,4}^{\infty}.
\end{align*}
Finally, combining the above we get
\begin{align*}
J(u) - J(\hat{u}) &\leq I_{1}^{\infty} - I_{2}^{\infty} - I_{3}^{\infty} - I_{4}^{\infty}\\
&\leq  -J_1^{\infty} - I_{2}^{\infty} - I_{3}^{\infty} - I_{4}^{\infty}\\
&\leq 0.
\end{align*}
This holds for all $u\in \mathcal{A}_{\mathcal{E}}$, so the proof is complete.
\end{proof}

\section{Examples}

\begin{example}[Optimal Consumption Rate Part I]
Let
\[
 J(u) = E\left[\int_0^{\infty}e^{-\rho t} \ln\big({u(t)X(t)}\big)dt\right],
\]
where
\begin{align*}
 dX(t) &= X(t)(\mu(t) - u(t))dt + X(t)\sigma(t)dB(t),\\
 X(0) &= x_0,
\end{align*}
and $\rho \geq 0$.
We have that
\[
X(t) = X_0 \exp\left[\int_0^t [(\mu(s) - u(s)) - \frac{1}{2}\sigma^2(s)]ds + \int_0^t \sigma(s)dB(s) \right].
\]
Then we deal with the problem of maximizing $J(u)$ over all $u(t)\geq 0$.
We have the Hamiliton function takes the form
\[
 H(t,x,u,p,q) = e^{-\rho t}\ln(ux) +x(\mu - u)p + x\sigma q,
\]
so that we get the partial derivatives
\[
 \nabla_x H(t,x,u,p,q) = \frac{e^{-\rho t}}{x} + (\mu -u)p + \sigma q,
\]
and
\[
 \nabla_u H(t,x,u,p,q) = \frac{e^{-\rho t}}{u} - xp,
\]
This gives us that
\begin{align*}
&-dp(t) = \left[ \frac{e^{-\rho t}}{X(t)} + (\mu(t) - u(t))p(t) + \sigma(t)q(t)\right]dt - q(t)dB(t).
\end{align*}
so that
\[
 \hat{u}(t) = \frac{e^{-\rho t}}{\hat{X}(t)\hat{p}(t)}.
\]
Let us try the infinite horizon BSDE with terminal condition $\underset{t \to \infty}{\lim} p(t) = 0$,
\begin{align}
&-dp(t) = \left[ \frac{e^{-\rho t}}{X(t)} + (\mu(t) - u(t))p(t) + \sigma(t)q(t)\right]dt - q(t)dB(t),\label{eqp1}\\
& \underset{t \to \infty}{\lim} p(t) = 0.\label{eqp2}
\end{align}

\begin{lemma}[Solution of infinite horizon linear BSDE with jumps]
Let $A(t),\beta(t)$ and $\alpha(t,\zeta)$ be $\mathcal{F}_t$-predictable processes such that
\begin{align*}
E\left[\int_0^{\infty} \{|A(t)| + \beta^2(t) + \int_{\mathbf{R}}\alpha^2(s,\zeta)\nu(d\zeta)\}dt\right] < \infty,
\end{align*}
and define $\Gamma_{t,s}$ as the solution of the linear SDE
\begin{align*}
d\Gamma_{t,s} &= \Gamma_{t^{-},s}\bigg(A(t)dt + \beta(t)dB(t) + \int_{\mathbb{R}_0^n} \alpha(t,\zeta) \bar{N}(d\zeta,dt)\bigg), s \geq t \geq 0,\\
\Gamma_{t,t} &= 1.
\end{align*}
Let $C(t)$ be a predictable process such that
\begin{align*}
E\left[\int_0^{\infty} \Gamma_{0,s}|C(s)|ds\right] < \infty.
\end{align*}
Then a solution $(Y(t),Z(t),K(t,\zeta))$ of the linear BSDE
\begin{align*}
-dY(t) &= \left[ A(t)Y(t) + Z(t)\beta(t) + C(t) + \int_{\mathbb{R}_0^n} \alpha(t,\zeta) K(t,\zeta) d\nu(\zeta) \right]dt\\
&- Z(t)dB(t) - \int_{\mathbb{R}_0^n}K(t,\zeta)\bar{N}(d\zeta,dt),\\
\lim Y(t) &= 0, t \to \infty,
\end{align*}
is given by
\[
 Y(t) = E\left[\int_t^{\infty}\Gamma_{t,s}C(s)ds|\mathcal{F}_t\right], t \geq 0.
\]
If in additon
\begin{align*}
E\left[\int_0^{\infty} e^{\lambda t}|Y(t)|^2dt\right] < \infty,
\end{align*}
where $\lambda$ as in \eqref{eq:lambda_req}, then $Y(t)$ is the unique solution.
\end{lemma}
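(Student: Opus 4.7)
The strategy is two-fold: first verify by direct construction that the conditional-expectation formula yields a solution of the linear BSDE, and then invoke the existence-uniqueness theorem of Section~3 to settle uniqueness under the extra $L^2_\lambda$ hypothesis.

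First I would exploit the flow property $\Gamma_{0,s}=\Gamma_{0,t}\Gamma_{t,s}$ for $s\ge t$, which is immediate from the linearity of the SDE defining $\Gamma$. Combined with the tower property this lets us rewrite the candidate as
\begin{align*}
\Gamma_{0,t}Y(t)=E\Big[\int_t^{\infty}\Gamma_{0,s}C(s)\,ds\,\Big|\,\mathcal{F}_t\Big]=\tilde M(t)-\int_0^t\Gamma_{0,s}C(s)\,ds,
\end{align*}
where $\tilde M(t):=E\big[\int_0^{\infty}\Gamma_{0,s}C(s)\,ds\,\big|\,\mathcal{F}_t\big]$ is a uniformly integrable martingale by the hypothesis $E[\int_0^{\infty}\Gamma_{0,s}|C(s)|\,ds]<\infty$. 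The martingale representation theorem for the filtration generated by $(B,\tilde N)$ then produces predictable integrands $\phi,\psi$ with $d\tilde M(t)=\phi(t)\,dB(t)+\int_{\mathbb{R}_0^n}\psi(t,\zeta)\tilde N(d\zeta,dt)$.

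Next I would apply It\^o's product rule to $Y(t)=\Gamma_{0,t}^{-1}\cdot(\Gamma_{0,t}Y(t))$, using the Dol\'eans--Dade inversion of $\Gamma_{0,t}$ (whose inverse satisfies a linear SDE with coefficients $-A+\beta^2,\ -\beta,\ -\alpha/(1+\alpha)$). Matching the $dt$, $dB$, and $\tilde N$-components of the resulting dynamics with those of the prescribed BSDE determines the integrands
\begin{align*}
Z(t)=\Gamma_{0,t}^{-1}\phi(t)-\beta(t)Y(t), \qquad K(t,\zeta)=\tfrac{\Gamma_{0,t^{-}}^{-1}\psi(t,\zeta)-\alpha(t,\zeta)Y(t^{-})}{1+\alpha(t,\zeta)},
\end{align*}
and verifies the BSDE. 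For the terminal condition, martingale convergence yields $\tilde M(t)\to\int_0^{\infty}\Gamma_{0,s}C(s)\,ds$ a.s.\ and in $L^1$, hence $\Gamma_{0,t}Y(t)\to 0$; the bound $|Y(t)|\le\Gamma_{0,t}^{-1}E[\int_t^{\infty}\Gamma_{0,s}|C(s)|\,ds\mid\mathcal{F}_t]$ combined with conditional dominated convergence gives $Y(t)\to 0$.

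For uniqueness under the weighted $L^2$ hypothesis, observe that the driver $g(t,y,z,k)=A(t)y+\beta(t)z+C(t)+\int\alpha(t,\zeta)k(\zeta)\,\nu(d\zeta)$ is linear and satisfies the monotonicity and Lipschitz conditions of the Section~3 existence-uniqueness theorem, with $\mu,\,K_1,\,K_2$ controlled by the essential suprema of $A,\beta,\alpha$ respectively; choosing $\lambda>2\mu+K_1^2+K_2^2$ as in \eqref{eq:lambda_req}, any two solutions of the BSDE satisfying the $L^2_\lambda$ bound lie in the uniqueness class of that theorem and therefore coincide. I expect the main technical obstacle to be the inversion step for $\Gamma_{0,t}$: the bracket contributions to the product rule produce factors of $(1+\alpha(t,\zeta))$ that must be inverted to identify $K$, requiring the standard Dol\'eans--Dade condition $\alpha(t,\zeta)>-1$, and the passage $Y(t)\to 0$ (rather than the easier $\Gamma_{0,t}Y(t)\to 0$) needs the conditional dominated-convergence step spelled out above since $\Gamma_{0,t}^{-1}$ need not be bounded in~$t$.
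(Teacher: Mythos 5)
Your proposal is correct in substance, but it runs the argument in the opposite logical direction from the paper, and the two directions buy different things. The paper's proof is the short "analysis" direction: it takes a solution $(Y,Z,K)$ of the BSDE as given, applies It\^o's product rule to $\Gamma_{0,t}Y_t$ (the terms $A(t)Y+\beta(t)Z+\int\alpha K\,\nu(d\zeta)$ in the driver cancel against the dynamics of $\Gamma$, leaving drift $-\Gamma_{0,t}C_t\,dt$), integrates from $t$ to $\infty$ using the terminal condition, and takes conditional expectation to kill the martingale terms and read off the representation formula; it then cites Theorem 3.1 for uniqueness, exactly as you do. Your proof is the constructive "synthesis" direction: define $Y$ by the formula, extract $\phi,\psi$ by martingale representation of $\tilde M$, invert $\Gamma_{0,t}$ via Dol\'eans--Dade, and identify $Z$ and $K$ by matching coefficients. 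Your route genuinely proves the existence assertion of the lemma (that the formula \emph{is} a solution), which the paper's computation only yields by reading the cancellation backwards; the price is the extra hypothesis $\alpha(t,\zeta)>-1$ and the inversion bookkeeping (note your drift coefficient for $\Gamma_{0,t}^{-1}$ should also carry the jump correction $\int\frac{\alpha^2(t,\zeta)}{1+\alpha(t,\zeta)}\nu(d\zeta)$, though your final formulas for $Z$ and $K$ are the right ones). Both arguments share the same soft spot, which you at least flag and the paper does not: martingale convergence gives $\Gamma_{0,t}Y(t)\to 0$, and passing from this to $\lim_{t\to\infty}Y(t)=0$ requires some control on $\Gamma_{0,t}^{-1}$ beyond what is assumed; your conditional-dominated-convergence step needs that control to be made explicit.
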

\begin{proof}
 By It$\bar{o}$'s Lemma we have that
\begin{align*}
d(\Gamma_{0,t} Y_t) &= -\Gamma_{0,t}C_tdt + \Gamma_{0,t}(Z_t + Y_t \beta_t)dB_t\\
&+ \int_{\mathbb{R}_0^n}\Bigl[Y(t)\alpha(t,\zeta)\Gamma_{0,t} + K(t,\zeta)\Gamma_{0,t}
+ K(t,\zeta)\alpha(t,\zeta)\Gamma_{0,t}\Bigr]\tilde{N}(d\zeta,dt).
\end{align*}
So
\begin{align*}
\Gamma_{0,t} Y_t &+ \int_t^{\infty} \Gamma_{0,s} C_s ds =  \int_t^{\infty}  \Gamma_{0,s}(Z_s + Y_s \beta_s)dB(s)\\
&+ \int_t^{\infty} \int_{\mathbb{R}_0^n} \Bigl[Y(s)\alpha(s,\zeta)\Gamma_{0,s} + K(s,\zeta)\Gamma_{0,s} 
+ K(s,\zeta)\alpha(s,\zeta)\Gamma_{0,s}\Bigr]\tilde{N}(d\zeta,ds).
\end{align*}
By taking expectation we get the desired result. The uniqueness follows from Theorem 3.1.
\end{proof}

From the above lemma we see that the solution of the linear, infinite horizon BSDE \eqref{eqp1} - \eqref{eqp2} is 
\[
\hat{p}(t) = E\left[ \int_t^{\infty} \frac{\hat{\Gamma}_s}{\hat{\Gamma}_t} \frac{e^{-\rho s}}{\hat{X}_s}ds | \mathcal{F}_t \right], 
\]
where
\[
 \hat{\Gamma}_t = e^{\int_0^t [(\mu(s) - u(s)) - \frac{1}{2}\sigma^2(s)]ds + \int_0^t \sigma(s)dB(s) } = \frac{\hat{X}(t)}{x_0}.    
\]
Hence
\[
 \hat{p}(t) = \frac{1}{\rho}e^{-\rho t}\frac{1}{\hat{X}(t)}.
\]
and
\[
\overline{\lim_{t \to \infty}} \hat{p}(t)(X(t) - \hat{X}(t)) \geq \overline{\lim_{t \to \infty}} \hat{p}(t)X(t) \geq 0.
\]
So 
\[
 \hat{u}(t) =  \rho,
\]
is an optimal control.
\end{example}

\begin{example}[Optimal Consumption Rate - part II]
Let
\[
 J(u) = E\left[\int_0^{\infty}e^{-\rho t} \ln\big({u(t)X(t)}\big)dt\right],
\]
where
\begin{align*}
 dX(t) &= X(t)\mu(t)(1 - u(t))dt + X(t)\sigma(t)(1 - u(t))dB(t),\\
 X(0) &= x_0,
\end{align*}
and $\rho \geq 0$.
We have that
\begin{align*}
X(t) &= X_0 \exp\Bigg[\int_0^t [\mu(s)(1 - u(s)) - \frac{1}{2}\sigma^2(s)(1 - u(t))^2]ds\\
&+ \int_0^t \sigma(s)(1 - u(s))dB(s) \Bigg].
\end{align*}
Then we deal with the problem of maximizing $J(u)$ over all $u(t)\geq 0$.
We have the Hamiliton function takes the form
\[
 H(t,x,u,p,q) = e^{-\rho t}\ln(ux) +x\mu(1 - u)p + x\sigma(1-u) q,
\]
so that we get the partial derivatives
\[
 \nabla_x H(t,x,u,p,q) = \frac{e^{-\rho t}}{x} + \mu(1 -u)p + \sigma(1-u) q,
\]
and
\[
 \nabla_u H(t,x,u,p,q) = \frac{e^{-\rho t}}{u} - x\mu p - x\sigma q.
\]
This gives us that
\begin{align*}
&-dp(t) = \left[ \frac{e^{-\rho t}}{X(t)} + \mu(t)(1 - u(t))p(t) + \sigma(t)(1 - u(s))q(t)\right]dt - q(t)dB(t).
\end{align*}
So that
\[
 \hat{u}(t) = \frac{e^{-\rho t}}{\hat{X}(t)(\mu\hat{p}(t) + \sigma \hat{q}(t))}.
\]
Let us try the infinite horizon BSDE with terminal condition\\
 $\underset{t \to \infty}{\lim} p(t) = 0$, so that
\begin{align}
-dp(t) &= \left[ \frac{e^{-\rho t}}{X(t)} + \mu(t)(1 - u(t))p(t) + \sigma(t)(1 - u(s))q(t)\right]dt \notag\\
&- q(t)dB(t),\label{eq:p1}\\
 \underset{t \to \infty}{\lim} p(t) &= 0.\label{eq:p2}
\end{align}
From the above lemma we see that the solution of the linear, infinite horizon BSDE \eqref{eq:p1} - \eqref{eq:p2} is 
\[
\hat{p}(t) = E\left[ \int_t^{\infty} \frac{\hat{\Gamma}_{0,s}}{\hat{\Gamma}_{0,t}} \frac{e^{-\rho s}}{\hat{X}_s}ds | \mathcal{F}_t \right], 
\]
where
\begin{align*}
 \hat{\Gamma}_t &= \exp \Bigg[\int_0^t \left[\mu(s)(1 - u(s)) - \frac{1}{2}\sigma^2(s)(1 - u(s))^2\right]ds\\
&+ \int_0^t \sigma(s)(1 - u(s))dB(s) \Bigg]\\
&= \frac{\hat{X}(t)}{x_0}. 
\end{align*}
Hence
\[
 \hat{p}(t) = \frac{1}{\rho}e^{-\rho t}\frac{1}{\hat{X}(t)}.
\]
and
\[
\overline{\lim_{t \to \infty}} \hat{p}(t)(X(t) - \hat{X}(t)) \geq \overline{\lim_{t \to \infty}} \hat{p}(t)X(t) \geq 0.
\]
Since
\begin{align*}
d(e^{-\rho t}\frac{1}{X(t)}) &= e^{-\rho t}\frac{1}X(t)dt - e^{-\rho t}\frac{1}{X(t)} (\mu(t) - u(t))dt\\ 
&+ e^{-\rho t}\frac{1}{X(t)} \sigma^2(t)dt + e^{-\rho t}\frac{1}{X(t)}\sigma(t)dB(t),
\end{align*}
we must have that
\[
 \hat {q}(t) = \frac{1}{\rho}e^{-\rho t}\frac{1}{\hat{X}(t)} \sigma(t).
\]
So 
\[
 \hat{u}(t) =  \frac{\rho}{\mu + \sigma},
\]
is an optimal control.
\end{example}

\begin{example}[Optimal consumption rate - part III]
As above, let
\[
 J(u) = E\left[\int_0^{\infty}e^{-\rho t} \ln\big({u(t)X(t)}\big)dt\right].
\]
But add a jump part
\begin{align*}
 dX(t) &= X(t)(\mu(t) - u(t))dt + X(t)\sigma(t)dB(t) + X(t)\int_{\mathbb{R}_0} \theta(t) z \tilde{N}(dz,dt)\\
 X(0) &= x_0,
\end{align*}
and we also add the assumption that we only know a subset of the information given by the market available at time t, represented by $\mathcal{E}_t \subset \mathcal{F}_t$. Let $\rho \geq 0$, be a random variable adapted to $\mathcal{F}_t$.
Then we deal with the problem of maximizing $J(u)$ over all $u(t)\geq 0$.
We have
\[
 H(t,x,u,p,q,r) = e^{-\rho t}\ln(ux) +x(\mu - u)p + x\sigma q + x\int_{\mathbb{R}_0} \theta(t) z r(t,z) \nu(dz)
\]
\[
 \nabla_x H(t,x,u,p,q,r) = \frac{e^{-\rho t}}{x} + (\mu -u)p + \sigma q + \int_{\mathbb{R}_0} \theta(t) z r(t,z) \nu(dz),
\]
\[
 \nabla_u H(t,x,u,p,q,r) = \frac{e^{-\rho t}}{u} - xp
\]

and
\begin{align*}
-dp(t) &= [ \frac{e^{-\rho t}}{X(t)} + (\mu(t) - u(t))p(t) + \sigma(t)q(t) + \int_{\mathbb{R}_0} \theta(t) z r(t,z) \nu(dz)]dt\\
&- q(t)dB(t)- \int_{\mathbb{R}_0} \theta(t) z \tilde{N}(dz,dt), \label{eqp1}\\
\underset{t \to \infty}{\lim} p(t) &= 0.
\end{align*}
If we maximice
\[
 E[H(t,\hat{X}(t),u,\hat{p}(t),\hat{q}(t), \hat{r}(t,\cdot))| \mathcal{E}_t],
\]
we get that
\begin{align*}
\nabla_u E[H(t,\hat{X}(t),u,\hat{p}(t),\hat{q}(t))| \mathcal{E}_t] &=  E[\nabla_u H(t,\hat{X}(t),u,\hat{p}(t),\hat{q}(t))| \mathcal{E}_t]\\
&=  E[\frac{e^{-\rho t}}{u} - \hat{X}(t)\hat{p}(t) | \mathcal{E}_t].
\end{align*}
So that
\[
 \hat{u}(t) = E[\frac{e^{-\rho t}}{\hat{X}(t)\hat{p}(t)}| \mathcal{E}_t].
\]
The solution of the linear, infinite horizon BSDE \eqref{eqp1} - \eqref{eqp2} is (see \cite{BSDE})
\[
\hat{p}(t) = E\left[ \int_t^{\infty} \frac{\hat{\Gamma}_s}{\hat{\Gamma}_t} \frac{e^{-\rho s}}{\hat{X}_s}ds | \mathcal{F}_t \right], 
\]
where
\begin{align*}
&d\hat{\Gamma}_t = X(t)(\mu(t) - u(t))dt + X(t)\sigma(t)dB(t) + X(t^{-})\int_{\mathcal{R}_0} \theta(t) z \tilde{N}(dz,dt), \\
&X(0) = 1. 
\end{align*}
So 
\[
 \hat{\Gamma}_t = \frac{\hat{X}(t)}{x_0}.
\]
Hence
\[
 \hat{p}(t) = \frac{1}{x_0 \hat{\Gamma}_t}\frac{1}{\rho}e^{-\rho t} = \frac{1}{\rho}e^{-\rho t}\frac{1}{\hat{X}(t)}.
\]
Therefore we have that
\[
\overline{\lim_{t \to \infty}} \hat{p}(t)(X(t) - \hat{X}(t)) = \overline{\lim_{t \to \infty}} \hat{p}(t)X(t) \geq 0.
\]
So 
\[
 \hat{u}(t) =  E[ \rho,| \mathcal{E}_t]
\]
is an optimal control.
\end{example}

\begin{example}[Optimal Portfolio Selection With Consumption]
For this example let us look at a market with two investment possibilities:
\begin{enumerate}
 \item A bond or bank account
\[
 dZ_0(t) = \rho Z_0(t)dt.
\]
\item A stock
\[
 dZ_1(t) = \mu Z_1(t)dt + \sigma Z_1(t) dB(t).
\]
\end{enumerate}
Let $(Y_0,Y_1)$ denote the amount the agent has invested in the bonds and stocks repectively at time t.
Consider then $u(t,\omega) = u(t)$, the fraction of the wealth invested in the stocks, e.g. 
\[
 u(t) = \frac{Z_1(t)}{Z_0(t) + Z_1(t)}.
\]
Further let $\lambda(t,\omega) = \lambda(t)$ be the consumption rate relative to the wealth so that the investor controls
\[
 c(t) = (u(t),\lambda(t)).
\]
Then let
\[
 J^{\lambda,u}(s,z) = E^{s,z}\left[\int_0^{\infty}e^{-\delta(s+t)} \frac{(\lambda(t)X(t))^{\gamma}}{\gamma}\right],
\]
be a performance functional, where
\[
 dX(t) = X(t)\left[(\rho + u(t)(\mu - \rho) -\lambda(t))dt + \sigma u(t)dB(t)\right],
\]
and $\rho \geq 0$.
We have that
\[
X(t) = x_0\exp\left[\int_0^t [\rho + u(s)(\mu - \rho) - \lambda(s) - \frac{1}{2}\sigma^2 u^2 ]ds + \int_0^t \sigma u(s)dB(s)\right].
\]
Then we want to maximize $J^{u,\lambda}(s,t)$ over all $ l = (u(t), \lambda(t))$, $ \lambda \geq 0$.
We have that
\[
 H(t,x,l,p,q) = e^{-\delta(s+t)}\frac{(\lambda(t)X(t))^{\gamma}}{\gamma} + x(\rho + u(s)(\mu - \rho) - \lambda)p + x\sigma u q,
\]
so that
\[
 \nabla_x H(t,x,l,p,q) =  e^{-\delta(s+t)} \lambda^{\gamma}x^{\gamma-1} + (\rho + u(\mu - \rho) - \lambda)p +  \sigma u q.
\]
Further, we also have
\[
 -dp(t) = [e^{-\delta(s+t)} \lambda^{\gamma}(t)X^{\gamma-1}(t) + (\rho + u(t)(\mu - \rho) - \lambda(t))p + \sigma u(t) q ]dt - q dB(t).
\]
and
\[
 \nabla_u H(t,x,l,p,q) = (\mu - \rho)xp + x\sigma q,
\]
\[
 \nabla_{\lambda} H(t,x,l,p,q) = e^{-\delta(s+t)} (\lambda(t))^{\gamma-1} X^{\gamma} - x p.
\]
So that
\[
 q(t) = -\frac{(\mu - \rho)}{\sigma}p(t),
\]
and
\[
 \hat{\lambda} = \frac{1}{x} p^{\frac{1}{\gamma -1}} e^{\frac{\delta(s+t)}{\gamma -1}}.
\]
Then 
\begin{align*}
dp(t) &=  - e^{\frac{\delta(s+t)}{\gamma -1}}\frac{1}{X}p^{\frac{\gamma}{\gamma - 1}}(t)dt - [\rho + u(t)(\mu - \rho) -  \frac{1}{X}(t) p^{\frac{1}{\gamma -1}}e^{\frac{\delta(s+t)}{\gamma -1}}]p(t)dt\\
 &+ \sigma u(t) \frac{(\mu - \rho)}{\sigma}p(t) dt - \frac{(\mu - \rho)}{\sigma}p(t) dB(t)\\
&=  - \rho p(t)dt - \frac{(\mu - \rho)}{\sigma}p(t) dB(t).
\end{align*}
So to ensure that the requirement
\[
 E[ \underset{t \to \infty}{\overline{\lim}} \hat{p}(t)(X(t) - \hat{X}(t)) ] \geq 0,
\]
is satisfied we need that
\[
 E[\underset{t \to \infty}{\overline{\lim}} -  \hat{p}(t)\hat{X}(t) ] \geq 0.
\]
Since
\[
 \hat{p}(t) = (\hat{\lambda}(t)\hat{X}(t))^{(\gamma -1)} e^{\delta(s+t)},
\]
we see that 
\[
-\hat{p}(t)\hat{X}(t) = \hat{\lambda}^{(\gamma -1)}(t)\hat{X}^{\gamma}(t) e^{\delta(s+t)}.
\]
So, by considering
\[
 \hat{\lambda} = \frac{1}{x} p^{\frac{1}{\gamma -1}} e^{\frac{\delta(s+t)}{\gamma -1}},
\]
we try to let 
\[
 p^{\frac{1}{\gamma - 1}}(t) = X(t) K e^{B t},
\]
for some constants $K$ and $B$.It is now clear that
\begin{align*}
 d(p^{\frac{1}{\gamma - 1}}(t)) &= p^{\frac{1}{\gamma - 1}}(t)\frac{1}{\gamma -1}(-\rho dt - \frac{(\mu - \rho)}{\sigma} dB(t))\\
&+ p^{\frac{1}{\gamma - 1}}\frac{1}{2}\frac{1}{\gamma -1}\frac{2-\gamma}{\gamma -1}\frac{(\mu - \rho)^2}{\sigma^2}dt.
\end{align*}
On the other hand we have that
\begin{align*}
d(X(t)K e^{B t}) &= B X(t)K e^{B t} dt  + X(t)K e^{B t}[\rho + u(t)(\mu -\rho) - K e^{B t} e^{\frac{\delta(s+t)}{\gamma -1}} ]dt\\
&+ X(t)K e^{B t} \sigma u(t)dB(t).
\end{align*}
Consider
\[
 \hat{u}(t) = -\frac{(\mu - \rho)}{\sigma^2 (\gamma -1)},
\]
and
\begin{align*}
K &= e^{-Bt}e^{-\frac{\delta(s+t)}{\gamma -1}}[B +  \frac{\gamma \rho}{\gamma -1}- \frac{1}{2}\gamma \frac{(\mu - \rho)^2}{\sigma^2 (\gamma -1)^2}].
\end{align*}
For K to be independent of $t$, we must have $B= -\frac{\delta}{\gamma -1}$,
which gives us
\begin{align*}
K &= \left[ -\frac{\delta}{\gamma -1} +  \frac{\gamma \rho}{\gamma -1}- \frac{1}{2}\gamma \frac{(\mu - \rho)^2}{\sigma^2 (\gamma -1)^2}\right].
\end{align*}
With this $K$ and
\[
 \hat{u}(t) = -\frac{(\mu - \rho)}{\sigma^2 (\gamma -1)}
\]
we can conclude that we have 
\[
 p^{\frac{1}{\gamma - 1}}(t) = X(t) K e^{B t}.
\]
It is now clear that
\[
 \hat{\lambda}(t) = K\\
= \hat{\lambda}.
\]
which gives us that
\begin{align*}
\hat{p}(t)\hat{X}(t) &= X^{\gamma}(t) K^{\gamma}(t) e^{\gamma B t}\\
&=  X^{\gamma}(t) \hat{\lambda}^{\gamma},
\end{align*}
so that
\begin{align*}
 -\hat{p}(t)\hat{X}(t) &=-e^{-\frac{\gamma \delta(s+t)}{\gamma -1}}\hat{\lambda}^{\gamma}x^{\gamma}_0 e^{\gamma \int_0^t \rho  - \frac{(\mu - \rho)^2}{\sigma^2 (\gamma - 1)} - \hat{\lambda} - \frac{(\mu - \rho)^2}{\sigma^2 (\gamma - 1)^2}]ds -  \gamma\int_0^t \frac{(\mu - \rho)}{\sigma (\gamma - 1)} dB(s)}\\
&=-e^{-\frac{\gamma \delta(s+t)}{\gamma -1}}_0 \hat{\lambda}^{\gamma}e^{\gamma \rho t - \gamma\frac{(\mu - \rho)^2}{\sigma^2 (\gamma - 1)^2}t - \gamma \hat{\lambda}t - \gamma \frac{(\mu - \rho)^2}{\sigma^2 (\gamma - 1)^2}t -  \gamma\frac{(\mu - \rho)}{\sigma (\gamma - 1)} B(t)}\\
&\geq -e^{-\frac{\gamma \delta(s+t)}{\gamma -1}}\hat{\lambda}^{\gamma}x^{\gamma}_0 e^{- \gamma^2 \frac{(\mu - \rho)^2}{\sigma^2 (\gamma - 1)^2} -  \gamma\frac{(\mu - \rho)}{\sigma (\gamma - 1)} B(t)}.
\end{align*}
If $\delta, \gamma, \rho$ deterministic, then
\begin{align*}
E[ \underset{t \to \infty}{\overline{\lim}} \hat{p}(t)(X(t) - \hat{X}(t)) ] &\geq  -\lim e^{-\delta(s+t)}\hat{\gamma}x^{\gamma} _0 e^{- \gamma^2 \frac{(\mu - \rho)^2}{\sigma^2 (\gamma - 1)^2}}E[ e^{-\frac{(\mu - \rho)}{\sigma (\gamma - 1)} B(t)}]\\
&= 0.
\end{align*}
So we have that $E[ \underset{t \to \infty}{\overline{\lim}} \hat{p}(t)(X(t) - \hat{X}(t)) ] = 0$, which gives us that $(\hat{\lambda},\hat{u})$, where
\[
 \hat{\lambda} = \left[ -\frac{\delta}{\gamma -1} +  \frac{\gamma \rho}{\gamma -1}- \frac{1}{2}\gamma \frac{(\mu - \rho)^2}{\sigma^2 (\gamma -1)^2}\right],
\]
and
\[
 \hat{u} = - \frac{(\mu - \rho)}{\sigma^2 (\gamma - 1)}
\]
is an optimal control.
\end{example}

\section{Necessary Maximum Principle}
To answer the question: if $\hat{u}$ is optimal does it satisfy 
\begin{align}
E\left[ H(t,\hat{X}(t),\hat{u}(t),\hat{p}(t),\hat{q}(t),\hat{r}(t,\cdot)) | \mathcal{E}_t\right] \notag \\
= \max_{u \in U} E\left[ H(t,\hat{X}(t),u,\hat{p}(t),\hat{q}(t),\hat{r}(t,\cdot)) | \mathcal{E}_t\right],
\end{align}
we assume the following two requirements:
\begin{enumerate}
\item[\textbf{A1}] For all $t,h$ such that $0\leq t < t+h \leq T$, all $i=1, \ldots,k$ and for all bounded $\mathcal{E}_t$-measurable $\alpha = \alpha(\omega)$, the control $\beta(s):= (0,\ldots,\beta_i(s),0,\ldots,0) \in U \subset \mathbb{R}^{k}$ with
\begin{align*}
\beta(s) := \alpha_i \mathbf{1}_{[t,t+h]}(s),
\end{align*}
belongs to $\mathcal{A}_{\mathcal{E}}$.
\item[\textbf{A2}] For all $u, \beta \in \mathcal{A}_{\mathcal{E}}$ with $\beta$ bounded, there exists $\delta > 0$ such that $\hat{u} + \epsilon \beta \in \mathcal{A}_{\mathcal{E}}$ for all $\epsilon \in (-\delta,\delta)$.\\

Given $u, \beta \in \mathcal{A}_{\mathcal{E}}$ with $\beta$ bounded, define the process $Y(t) = Y^{(u,\beta)}(t)$ by
\[
 Y(t) = \frac{d}{d\epsilon} X^{\hat{u} + \epsilon \beta }(t)|_{\epsilon=0} = (Y_1(t),..., Y_n(t))^T.
\]
Notice that $Y(0)=0$ and 
\[
 dY_i(t) = \lambda_i(t)dt + \sum_{j=1}^n \xi_{ij}(t)dB_j(t) + \sum_{j=1}^n \int_{\mathbb{R}_0^n} \zeta_{ij}(t,z)\tilde{N}_j(dz,dt),
\]
where
\begin{align*}
 \lambda_i(t) &=   \nabla_x b_i(t,X(t),u(t))^TY(t) + \nabla_u b_i(t,X(t),u(t))^T\beta(t),\\
 \xi_{ij}(t) &=    \nabla_x \sigma_{ij}(t,X(t),u(t))^TY(t) + \nabla_u \sigma_{ij}(t,X(t),u(t))^T\beta(t),\\
 \zeta_{ij}(t,z) &=  \nabla_x \theta_{ij}(t,X(t),u(t))^TY(t) + \nabla_u \theta_{ij}(t,X(t),u(t))^T\beta(t).
\end{align*}

 \end{enumerate}
We can then give a answer to the question.
\begin{theorem}[Partial Information Necessary Maximum Principle]
Suppose $\hat{u} \in \mathcal{A}_{\mathcal{E}}$ is a local maximum for $J(u)$, meaning that for all bounded $\beta \in \mathcal{A}_{\mathcal{E}}$ there exists a $\delta > 0$ such that $\hat{u} + \epsilon \beta \in \mathcal{A}_{\mathcal{E}}$ for all $\epsilon \in (-\delta,\delta)$ and
\[
 h(\epsilon) := J(\hat{u} + \epsilon \beta), \epsilon \in (-\delta,\delta)
\]
is maximal at $\epsilon =0$.
Suppose there exists a solution $(\hat{p}(t),\hat{q}(t),\hat{r}(t,z))$ to the adjoint equation
\begin{align*}
d\hat{p}(t) &= - \nabla_x H(t,\hat{X}(t),\hat{u}(t),\hat{p}(t),\hat{q}(t),\hat{r}(t,\cdot))dt + \hat{q}(t)dB(t)\notag\\
&+ \int_{\mathbb{R}_0^n}\hat{r}(z,t)\tilde{N}(dz,dt),
\end{align*}
and
\[
0 \leq E\left[ \overline{\lim_{t \to \infty}} [ \hat{p}(t)^{T}(X(t) - \hat{X}(t)) ]\right] < \infty,
\]
for all $u \in \mathcal{A}_{\mathcal{E}}$ and $p(t)Y(t,\epsilon)$ converges as $t \to \infty$, uniformly in $\epsilon$, where $Y(t,\epsilon) := \frac{\partial}{\partial \epsilon} X^{\hat{u} + \epsilon \beta}$.
Moreover assume that if $\hat{Y}(t) = Y^{(\hat{u},\beta)}(t)$, with corresponding coefficients $\hat{\lambda}_i$, $\hat{\xi_{ij}}$, $\hat{\zeta_{ij}}$, we have
\[
 E\left[ \hat{Y}(t)^T [\hat{q}\hat{q}^T(t) + \int_{\mathbb{R}_0^n} \hat{r}\hat{r}^T(t,z)\nu(dx)]\hat{Y}(t)dt\right] < \infty,
\]
and
\[
E\left[ \int_0^{\infty} \hat{p}^T(t) [\hat{\xi}\hat{\xi}^T(t,\hat{X}(t),\hat{u}(t)) + \int_{\mathbb{R}_0^n} \hat{\zeta} \hat{\zeta}^T(t,\hat{X}(t),\hat{u}(t),z)\nu(dz)]\hat{p}(t)dt\right] < \infty.
\]

Then $\hat{u}$ is a stationary point for $E[H|\mathcal{E}]$ in the sense that for all $t \geq 0$,
\[
 E[ \nabla_u H(t,\hat{X}(t),\hat{u}(t), \hat{p}(t),\hat{q}(t),\hat{r}(t,\cdot)) | \mathcal{E}_t] = 0.
\]

\end{theorem}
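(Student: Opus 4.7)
The plan is a standard variational argument adapted to infinite horizon. Since $\hat u$ is a local maximum, $\phi(\epsilon) := J(\hat u + \epsilon\beta)$ satisfies $\phi'(0) = 0$ for every bounded $\beta \in \mathcal{A}_{\mathcal E}$; A2 ensures that the perturbations $\hat u + \epsilon \beta$ are admissible. Differentiating under the integral sign (justified by the integrability assumption on $\nabla_u H$ together with standard $L^2$-estimates on the derivative process $\hat Y = Y^{(\hat u,\beta)}$ defined before the theorem), we obtain
\[
E\!\left[\int_0^\infty \bigl(\nabla_x f(t,\hat X,\hat u)^T \hat Y(t) + \nabla_u f(t,\hat X,\hat u)^T \beta(t)\bigr) dt\right] = 0. \qquad(\star)
\]

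The second step is to apply It\^o's formula to $\hat p(t)^T \hat Y(t)$ on $[0,T]$, substituting the adjoint equation \eqref{eq:adjoint} for $d\hat p$ and the SDE displayed before the theorem for $d\hat Y$. The $L^2$-bounds on $(\hat p,\hat q,\hat r)$ and on $(\hat Y,\hat\xi,\hat\zeta)$ imposed in the hypotheses guarantee that the $dB$- and $\tilde N$-integrals have mean zero. Using the definition \eqref{eq:hamiltonian} of $H$, the drift term $\hat p^T \hat\lambda + \operatorname{tr}(\hat q^T \hat\xi) + \sum_{i,j}\int \hat r_{ij}\hat\zeta_{ij}\,\nu_j(dz)$ splits into an $\hat Y$-piece that reproduces $\hat Y^T(\nabla_x H - \nabla_x f)$ and a $\beta$-piece equal to $\beta^T(\nabla_u H - \nabla_u f)$. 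After cancellation with the contribution $-\hat Y^T \nabla_x H$ coming from $d\hat p$, this yields
\[
E[\hat p(T)^T \hat Y(T)] = E\!\left[\int_0^T \bigl(-\hat Y^T \nabla_x f + \beta^T(\nabla_u H - \nabla_u f)\bigr) dt\right],
\]
and letting $T \to \infty$ and eliminating the $f$-derivatives via $(\star)$ collapses the right-hand side to $E\bigl[\int_0^\infty \beta(t)^T \nabla_u H(t,\hat X,\hat u,\hat p,\hat q,\hat r)\, dt\bigr]$.

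The delicate point, and what I view as the main obstacle, is to show that $\lim_{T\to\infty} E[\hat p(T)^T \hat Y(T)] = 0$. My plan here is to apply the terminal condition \eqref{eq:lim} once to $\hat u + \epsilon \beta$ and once to $\hat u - \epsilon \beta$ (both admissible for small $\epsilon$ by A2), divide by $\epsilon > 0$ in each case, and let $\epsilon \to 0^+$. The hypothesis that $\hat p(t) Y(t,\epsilon)$ converges as $t \to \infty$ uniformly in $\epsilon$ permits the interchange of $\lim_{t\to\infty}$ with $\lim_{\epsilon\to 0^+}$ and with the expectation, giving both $0 \leq E[\lim_{t\to\infty} \hat p(t)^T \hat Y(t)]$ and the reverse inequality, hence $\lim_{T\to\infty} E[\hat p(T)^T \hat Y(T)] = 0$. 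Combined with the previous paragraph, this gives $E[\int_0^\infty \beta(t)^T \nabla_u H(t)\, dt] = 0$ for every bounded $\beta \in \mathcal{A}_{\mathcal E}$.

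Finally, I localize using A1. Taking $\beta(s) = \alpha\, e_i\, \mathbf{1}_{[t,\, t+\eta]}(s)$, where $e_i$ is the $i$-th standard basis vector of $\mathbb R^n$ and $\alpha$ is a bounded $\mathcal E_t$-measurable random variable, the preceding identity reduces to
\[
E\!\left[\alpha \int_t^{t+\eta} (\nabla_u H)_i(s)\, ds\right] = 0.
\]
Dividing by $\eta$ and letting $\eta \to 0^+$ via Lebesgue differentiation produces $E[\alpha (\nabla_u H)_i(t)] = 0$ for almost every $t \geq 0$. Since $\alpha$ is an arbitrary bounded $\mathcal E_t$-measurable random variable and $i \in \{1,\ldots,n\}$ is arbitrary, this is exactly $E[\nabla_u H(t,\hat X,\hat u,\hat p,\hat q,\hat r) \mid \mathcal E_t] = 0$, which is the desired conclusion.
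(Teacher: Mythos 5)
Your proposal is correct and follows essentially the same route as the paper: first variation of $J$, It\^o's formula applied to $\hat p(t)^T\hat Y(t)$ to convert the adjoint dynamics into $\beta^T\nabla_u H$, vanishing of the boundary term $\lim_{T\to\infty}E[\hat p(T)^T\hat Y(T)]$ deduced from the two-sided perturbation $\hat u\pm\epsilon\beta$ under the transversality condition, and localization via the bump controls of A1. Your two-sided $\pm\epsilon$ argument is just the explicit form of the paper's observation that $Eg(\epsilon)\geq Eg(0)$ forces $\frac{d}{d\epsilon}Eg(\epsilon)|_{\epsilon=0}=0$, so there is no substantive difference.
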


\begin{proof}
Since
\[
0 \leq E\left[ \overline{\lim_{t \to \infty}} [ \hat{p}(t)^{T}(X(t) - \hat{X}(t)) ]\right],
\]
we have that
\[
  E\left[ \overline{\lim_{t \to \infty}} [ \hat{p}(t)^{T}X^{\hat{u} + \epsilon\beta}(t)] \right] \geq  E\left[ \overline{\lim_{t \to \infty}} [  \hat{p}(t)^{T}X^{\hat{u}}(t)) ]\right],
\]
for all $\beta  \in \mathcal{A}_{\mathcal{E}}$ for some $\epsilon$. Define
\[
 g(\epsilon) =  \overline{\lim_{t \to \infty}} [ \hat{p}(t)^{T}X^{\hat{u} + \epsilon\beta}(t)],
\]
so that
\[
 Eg(\epsilon) \geq Eg(0),
\]
for all $\beta \in \mathcal{A}_{\mathcal{E}}$. This means that
\[
\frac{d}{d \epsilon}(Eg(\epsilon))_{\epsilon = 0} = 0.
\]
So
\begin{align*}
0 &= \frac{\partial}{\partial \epsilon}(E\left[ \overline{\lim_{t \to \infty}} [ \hat{p}(t)^{T}X^{\hat{u} + \epsilon\beta}(t)] \right])|_{\epsilon=0}\\
&= E\left[ \frac{\partial}{\partial \epsilon}(\overline{\lim_{t \to \infty}} [ \hat{p}(t)^{T}X^{\hat{u} + \epsilon\beta}(t)])|_{\epsilon=0} \right]\\
&= E\left[ \overline{\lim_{t \to \infty}} [ \hat{p}(t)^{T}\frac{\partial}{\partial \epsilon}(X^{\hat{u} + \epsilon\beta}(t))|_{\epsilon=0}] \right].
\end{align*}
the interchanging of the limit w.r.t. the derivative operator holds for uniform limits with uniform convergence of the derivative. Interchanging derivative and integration is justified if
 \[
\left| \frac{\partial}{\partial \epsilon}(\overline{\lim_{t \to \infty}} [ \hat{p}(t)^{T}X^{\hat{u} + \epsilon\beta}(t,\omega)])|_{\epsilon=0} \right| \leq F(\omega),
 \]
for some integrable function $F$. Now let
\[
 h(\epsilon) = J(\hat{u} + \epsilon \beta),
\]
so that we have
\begin{align*}
0 &= h'(0)\\
&= E\Bigg[ \int_0^{\infty} \Big\{\nabla_x f(t,\hat{X}(t),\hat{u}(t))^T \frac{d}{d\epsilon} X^{\hat{u} + \epsilon \beta }(t)|_{\epsilon=0} 
+  \nabla_u f(t,\hat{X}(t),\hat{u}(t))^T \beta(t)\Big\} dt\\
&+  \overline{\lim_{t \to \infty}} [ \hat{p}(t)^{T}\frac{d}{d\epsilon}(X^{\hat{u} + \epsilon\beta}(t))|_{\epsilon=0}] \Bigg].
\end{align*}
Using It$\bar{o}$'s Lemma we get
\begin{align*}
&E\left[ \overline{\lim_{t \to \infty}} [ \hat{p}(t)^{T} \frac{d}{d\epsilon}(X^{\hat{u} + \epsilon \beta}(t))|_{\epsilon=0} ]\right]\\
&= E\Bigg[ \int_0^{\infty} \Big\{\hat{p}(t)\Big[\nabla_x b(t,\hat{X}(t),\hat{u}(t))^T \frac{d}{d\epsilon} X^{\hat{u} + \epsilon \beta }(t)|_{\epsilon=0}
+ \nabla_u b(t,\hat{X}(t),\hat{u}(t))^T \beta(t)\Big]^T\\
&+ \frac{d}{d\epsilon} X^{\hat{u} + \epsilon \beta }(t)|_{\epsilon=0}(-\nabla_x H(t,\hat{X}(t),\hat{u}(t), \hat{p}(t),\hat{q}(t),\hat{r}(t,\cdot))\\
&+ q(t)(\nabla_x \sigma(t,\hat{X}(t),\hat{u}(t))^T\frac{d}{d\epsilon} X^{\hat{u} + \epsilon \beta }(t)|_{\epsilon=0}  + \nabla_u \sigma (t,\hat{X}(t),\hat{u}(t))^T \beta(t)\\
&+ \hat{r}(t,z) (\nabla_x \theta(t,\hat{X}(t),\hat{u}(t))^T \frac{d}{d\epsilon} X^{\hat{u} + \epsilon \beta }(t)|_{\epsilon=0} + \nabla_u \theta(t,\hat{X}(t),\hat{u}(t))^T \beta(t) \nu(dz) \Big\}dt \Bigg].
\end{align*}
Since
\begin{align*}
\nabla_u H(t,x,u,p,q,r) &= \nabla_u f(t,x,u) + \nabla_u b(t,x,u)p(t) + \nabla_u \sigma(t,x,y)q(t)\\
&+ \int_{\mathbb{R}_0^n} \nabla_u \theta(t,x,u,z)r(t,z)\nu(dz) ,
\end{align*}
and
\begin{align*}
\nabla_u H(t,x,u,p,q,r) &= \nabla_x f(t,x,u) + \nabla_x b(t,x,u)p(t) + \nabla_x \sigma(t,x,y)q(t)\\
&+ \int_{\mathbb{R}_0^n} \nabla_x \theta(t,x,u,z)r(t,z)\nu(dz),
\end{align*}
we have
\begin{align*}
0 &= E\Bigg[ \int_0^{\infty} \Big\{\nabla_u f(t,\hat{X}(t),\hat{u}(t)) +  \nabla_u b(t,\hat{X}(t),\hat{u}(t))\hat{p}^T +  \nabla_u \sigma(t,\hat{X}(t),\hat{u}(t))\hat{q}^T\\
&+  \hat{r} \nabla_u \theta (t,\hat{X}(t),\hat{u}(t))\beta(t))\Big\}dt\Bigg]\\
&= E\left[ \int_0^{\infty} \nabla_u H(t,\hat{X}(t),\hat{u}(t), \hat{p}(t),\hat{q}(t),\hat{r}(t,\cdot))^T \beta(t) dt\right].
\end{align*}
Define
\[
 \beta(s) := \alpha \mathbf{1}_{[t,t+h]}(s).
\]
Then
\begin{align*}
E\left[ \int_t^{t+h} \nabla_u H(t,\hat{X}(t),\hat{u}(t), \hat{p}(t),\hat{q}(t),\hat{r}(t,\cdot))^T \alpha(t) dt\right] =0.
\end{align*}
Differentiating with respect to $h$ at $h=0$ gives
\begin{align*}
E\left[ \nabla_u H(t,\hat{X}(t),\hat{u}(t), \hat{p}(t),\hat{q}(t),\hat{r}(t,\cdot))^T \alpha\right] = 0.
\end{align*}
Since this holds for all $\mathcal{E}$ measurable $\alpha$, we have that
\begin{align*}
E\left[ \nabla_u H(t,\hat{X}(t),\hat{u}(t), \hat{p}(t),\hat{q}(t),\hat{r}(t,\cdot))^T \alpha | \mathcal{E}\right] = 0,
\end{align*}
which proves the theorem.
\end{proof}

\bibliographystyle{plain}
\bibliography{references}

\end{document}